\newfont{\footsc}{cmcsc10 at 8truept}
\newfont{\footbf}{cmbx10 at 8truept}
\newfont{\footrm}{cmr10 at 10truept}
\newtheorem{theorem}{Theorem}
\newtheorem{definition}[theorem]{Definition}
\newtheorem{lemma}[theorem]{Lemma}
\newtheorem{proposition}[theorem]{Proposition}
\newenvironment{proof}[1][Proof]{\noindent{\textbf {#1}  }}  {\hfill$\Box$\bigskip}
\begin{document}

\title{Cut-norms and spectra of matrices}
\author{Vladimir Nikiforov\\{\small Department of Mathematical Sciences, University of Memphis, Memphis TN
38152}}
\maketitle

\begin{abstract}
One of the aims of this paper is to solve an open problem of Lov\'{a}sz about
relations between graph spectra and cut-distance. The paper starts with
several inequalities between two versions of the cut-norm and the two largest
singular values of arbitrary complex matrices, exteding, in particular, the
well-known graph-theoretical Expander Mixing Lemma and giving a hitherto
unknown converse of it.

Next, cut-distance is defined for Hermitian matrices, and, separately, for
arbitrary complex matrices; using these extensions, we give upper bounds on
the difference of corresponding eigenvalues and singular values of two
matrices, thus solving the problem of Lov\'{a}sz.

Finally, we deduce a spectral sampling theorem, which informally states that
almost all principal submatrices of a real symmetric matrix are spectrally
similar to it.\medskip

\textbf{Keywords:} \textit{Cut-norm; cut-distance; operator norm; singular
values; spectral sampling. }

\end{abstract}

\section{Introduction}

In 1997, Frieze and Kannan \cite{FrKa99} introduced and studied the cut-norm
of matrices; ever since then this parameter kept getting new attention. It has
been extended and used for multidimensional matrices in \cite{AVKK03} and its
algorithmic aspects have been studied in \cite{AlNa04}. More recently,
starting with the cut-norm, Lov\'{a}sz and his coauthors in
\cite{LoSz06,LoSz07,BCLSV08,BCLSV09} defined a measure of similarity between
graphs, which they called the \emph{cut-distance} and used to investigate the
asymptotics of sequences of dense graphs.

It turned out that the cut-distance is related to many fundamental graph
parameters. In particular, in \cite{BCLSV09}, among many other things, it was
proved that if two graphs are close in cut-distance, then they are close
spectrally. Yet, since this specific result did not produce explicit
inequalities, Lov\'{a}sz \cite{Lov08} raised the problem to find the best
upper bound on the spectral difference in terms of the cut-distance of graphs.

One of the aims of this paper is to solve this problem and extend it to
arbitrary matrices. To this end, we start by establishing several tight
inequalities between two versions of the cut-norm and the two largest singular
values of arbitrary complex matrices. As first-hand applications of these
inequalities we extend the well-known graph-theoretical Expander Mixing Lemma
and its converse. In particular, we obtain a new converse of the Expander
Mixing Lemma, which is simpler than those in \cite{BiLi06,BoNi04,But06}.

Next, we extend the concept of cut-distance to Hermitian matrices, and,
separately, to arbitrary complex matrices; using these extensions, we give
upper bounds on the difference of corresponding eigenvalues and corresponding
singular values of two matrices.

As an application we deduce a spectral sampling theorem, which informally
states that almost all principal submatrices of a real symmetric matrix are
spectrally similar to it. These result complements results of \cite{RuVe07}
and \cite{ChLe08}.

The rest of the paper is organized as follows: in the remaining subsections of
the introduction we state our main results together with some discussions. All
proofs are collected in Section \ref{p}. At the end, some open question are raised.

\subsection{Notation and definitions}

First we introduce some notation and conventions. For undefined matrix
notation we refer the reader to \cite{HoJo85}. We write:\medskip

- $\mathcal{M}_{m,n}$ for the class of all complex matrices of size $m\times
n;$

- $\mathcal{H}_{n}$ for the class of all Hermitian matrices of size $n;$

- $\mathcal{P}_{n}$ for the class of all permutation matrices of size $n;$

- $J_{m,n}$ for the all ones matrix of size $m\times n,$ and set
$J_{n}=J_{n,n}.$

- $\left\langle \mathbf{x},\mathbf{y}\right\rangle $ for the standard inner
product in $\mathbb{C}^{n};$

- $\mathbf{y\otimes x}$ for the $m\times n$ matrix $\left[  y_{i}x_{j}\right]
$, where $\mathbf{x}=\left(  x_{1},\ldots,x_{n}\right)  $ and $\mathbf{y}%
=\left(  y_{1},\ldots,y_{m}\right)  .$\medskip

Given a matrix $A=\left[  a_{ij}\right]  \in\mathcal{M}_{m,n},$ we
write:\medskip

- $\left\vert A\right\vert _{\infty}$ for $\max_{i,j}\left\vert a_{ij}%
\right\vert ;$

- $\left\Vert A\right\Vert _{F}$ for the Frobenius norm $\sqrt{\sum
_{i,j}\left\vert a_{ij}\right\vert ^{2}};$

- $\left\Vert A\right\Vert _{2}$ for the operator norm of the linear map
$A:\mathbb{C}^{n}\rightarrow\mathbb{C}^{m}$;

- $\sigma_{1}\left(  A\right)  \geq\cdots\geq\sigma_{m}\left(  A\right)  $ for
the singular values of $A;$

- $\mu_{1}\left(  A\right)  \geq\cdots\geq\mu_{m}\left(  A\right)  $ for the
eigenvalues of $A$ if $A$ is Hermitian;

- $\Sigma\left(  A\right)  $ for the sum of the entries of $A;$

- $\rho\left(  A\right)  $ for $\Sigma\left(  A\right)  /mn;$

- $A^{\ast}$ for the conjugate transpose of $A;$

- $A\left[  X,Y\right]  $ for the submatrix of all $a_{ij}$ with $i\in X,$
$j\in Y,$ where $X\subset\left[  m\right]  ,$ $Y\subset\left[  n\right]
$.\bigskip

\begin{definition}
Following \cite{FrKa99}, for every $A\in\mathcal{M}_{m,n},$ define the
\emph{cut-norm} $\left\Vert A\right\Vert _{\square}$ of $A$ by%
\[
\left\Vert A\right\Vert _{\square}=\max_{X\subset\left[  m\right]  ,\text{
}Y\subset\left[  n\right]  }\frac{1}{mn}\left\vert \Sigma\left(  A\left[
X,Y\right]  \right)  \right\vert .
\]

\end{definition}

\begin{definition}
A similar, yet distinct norm $\left\Vert A\right\Vert _{\boxdot}$ can be
defined by
\[
\left\Vert A\right\Vert _{\boxdot}=\max_{X\subset\left[  m\right]  ,\text{
}Y\subset\left[  n\right]  ,\text{ }X,Y\neq\varnothing}\frac{1}{\sqrt
{\left\vert X\right\vert \left\vert Y\right\vert }}\left\vert \Sigma\left(
A\left[  X,Y\right]  \right)  \right\vert .
\]

\end{definition}

The norm $\left\Vert A\right\Vert _{\boxdot}$ is implicit in numerous papers
related to the second singular value and to expansion of graphs: specifically,
when $A$ is the adjacency matrix of a graph, the value $\left\Vert
A-\rho\left(  A\right)  J_{n}\right\Vert _{\boxdot}$ appeared first as
the\emph{ }$\alpha$\emph{-parameter }in Thomason \cite{Tho1,Tho2}; it was
developed further by Alon, Chung and Spencer \cite{AlCh88,AlSp92}, and more
recently it was studied in \cite{BiLi06,BoNi04,But06}.

Note that neither $\left\Vert A\right\Vert _{\square}$ nor $\left\Vert
A\right\Vert _{\boxdot}$ are sub-multiplicative; therefore, they are not
matrix norms in the strict sense as defined, say, in \cite{HoJo85}, Ch. 5.

\subsection{Bounds on cut-norms}

The following two upper bounds on $\sigma_{1}\left(  A\right)  $ in terms of
$\left\Vert A\right\Vert _{\square}$ and $\left\Vert A\right\Vert _{\boxdot}$
are the cornerstones of our investigation.

\begin{theorem}
\label{th0}Let $A\in\mathcal{M}_{m,n}.$ If $A$ is real, then%
\begin{equation}
\sigma_{1}\left(  A\right)  \leq2\sqrt{\left\vert A\right\vert _{\infty
}\left\Vert A\right\Vert _{\square}mn}; \label{gin1}%
\end{equation}
if $A$ is complex, then
\begin{equation}
\sigma_{1}\left(  A\right)  \leq4\sqrt{\left\vert A\right\vert _{\infty
}\left\Vert A\right\Vert _{\square}mn} \label{gin1.1}%
\end{equation}
and%
\begin{equation}
\sigma_{1}\left(  A\right)  \leq C\left\Vert A\right\Vert _{\boxdot}\sqrt{\log
m\log n} \label{gin2}%
\end{equation}
for some positive $C<10^{5.}.$ Inequalities (\ref{gin1}), (\ref{gin1.1}) and
(\ref{gin2}) are tight up to constant factors.
\end{theorem}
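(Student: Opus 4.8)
The plan is to treat the three inequalities separately, since (\ref{gin1})–(\ref{gin1.1}) concern $\left\Vert A\right\Vert _{\square}$ while (\ref{gin2}) concerns $\left\Vert A\right\Vert _{\boxdot}$. For (\ref{gin1}), start from $\sigma_1(A)=\left\Vert A\right\Vert_2=\max_{\mathbf{x},\mathbf{y}}\left\vert\langle A\mathbf{x},\mathbf{y}\rangle\right\vert$ over unit vectors, and attack it by a discretization/rounding argument: the singular value is attained at some unit $\mathbf{x}\in\mathbb{C}^n$, $\mathbf{y}\in\mathbb{C}^m$, but to connect with the cut-norm we must pass to $\pm1$ (or $0/1$) vectors. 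The standard trick is to write $\mathbf{x}$ as a weighted average of indicator vectors of superlevel sets of its positive part minus those of its negative part (a ``layer-cake'' decomposition), and similarly for $\mathbf{y}$; then $\langle A\mathbf{x},\mathbf{y}\rangle$ becomes a double integral over thresholds of terms $\Sigma(A[X,Y])$, each of which is bounded by $mn\left\Vert A\right\Vert_\square$ in absolute value. Matching the normalization of the averaging weights (which involves $\|\mathbf{x}\|_1$, $\|\mathbf{y}\|_1$, and then $\|\mathbf{x}\|_1\le\sqrt n$, $\|\mathbf{y}\|_1\le\sqrt m$ via Cauchy–Schwarz) will produce a bound of the form $\sqrt{mn\left\Vert A\right\Vert_\square}$ times a factor depending on $\left\vert A\right\vert_\infty$; one has to be a little careful because naive layer-cake on $\mathbf{x},\mathbf{y}$ separately gives $\sqrt{mn}\,\|A\|_\square$ rather than its square root, so the right move is to exploit $\|\mathbf{x}\|_2=\|\mathbf{y}\|_2=1$ together with $\left\vert A\right\vert_\infty$ — essentially an interpolation between the trivial bound $\sigma_1\le\sqrt{mn}\,\left\vert A\right\vert_\infty$ and the cut-norm bound — or alternatively to split $A$ itself and balance. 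For the complex case (\ref{gin1.1}), decompose $A=\mathrm{Re}\,A+i\,\mathrm{Im}\,A$ (or split each complex entry into real/imaginary parts, each bounded by $\left\vert A\right\vert_\infty$), apply the real bound to each part, note $\left\Vert \mathrm{Re}\,A\right\Vert_\square,\left\Vert \mathrm{Im}\,A\right\Vert_\square\le\left\Vert A\right\Vert_\square$, and use $\sigma_1(A)\le\sigma_1(\mathrm{Re}\,A)+\sigma_1(\mathrm{Im}\,A)$, which costs the extra factor $2$ (hence $4$ overall).

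For (\ref{gin2}), the logarithmic factors signal that a crude rounding will not work and one needs a finer net or a probabilistic rounding of the optimal singular vectors into many small blocks. The natural route is Grothendieck-type: $\left\Vert A\right\Vert_\boxdot$ controls the bilinear form $\langle A\mathbf{x},\mathbf{y}\rangle$ over $0/1$ vectors after suitable normalization, and one wants to dominate the spectral norm, which is a supremum over the Euclidean ball. Decompose the unit vectors $\mathbf{x},\mathbf{y}$ dyadically according to coordinate magnitude: group coordinates $j$ with $\left\vert x_j\right\vert\in(2^{-k-1},2^{-k}]$ into a set $X_k$, and similarly $Y_\ell$ for $\mathbf{y}$. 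Only $O(\log n)$ and $O(\log m)$ scales contribute non-negligibly (coordinates below $1/\mathrm{poly}$ are absorbed into an error term using $\left\Vert A\right\Vert_2\le\sqrt{mn}\,\left\vert A\right\vert_\infty$ or a bootstrapping estimate). On each pair of blocks, replace $\mathbf{x},\mathbf{y}$ by constant vectors on $X_k$, $Y_\ell$ (losing a factor $2$ per block), so $\left\vert\langle A_{X_k,Y_\ell}\mathbf{x},\mathbf{y}\rangle\right\vert\approx 2^{-k-\ell}\left\vert\Sigma(A[X_k,Y_\ell])\right\vert\le 2^{-k-\ell}\left\Vert A\right\Vert_\boxdot\sqrt{\left\vert X_k\right\vert\left\vert Y_\ell\right\vert}$. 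Since $\|\mathbf{x}\|_2=1$ forces $\sum_k 4^{-k}\left\vert X_k\right\vert\lesssim 1$, so $2^{-k}\sqrt{\left\vert X_k\right\vert}\lesssim 1$ for each $k$ (up to the count of scales), summing over the $O(\log m\log n)$ relevant pairs yields the claimed $C\left\Vert A\right\Vert_\boxdot\sqrt{\log m\log n}$. Getting the explicit constant $C<10^5$ is just a matter of tracking the factors of $2$; it is not tight and need not be optimized.

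For tightness, I would exhibit matrices where each inequality is within a constant of equality. For (\ref{gin1}): a $\pm1$ random matrix (or a Paley-type matrix) has $\sigma_1\asymp\sqrt{mn}$ and $\left\Vert A\right\Vert_\square\asymp 1/\sqrt{\min(m,n)}\cdot\sqrt{mn}/(mn)$... more precisely a quasirandom graph's adjacency matrix minus its average has $\left\Vert A\right\Vert_\square$ of order $\sqrt{mn}$ scaled appropriately while $\sigma_1\asymp\sqrt{mn}$, matching (\ref{gin1}) up to constants; concretely the all-ones matrix $J_{m,n}$ is the easy extremal example for the reverse direction, and a random sign matrix for the stated direction. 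For (\ref{gin2}): the known extremal examples showing $\left\Vert A\right\Vert_\boxdot$ can be as small as $\sigma_1/\sqrt{\log m\log n}$ come from tensor powers of a fixed small matrix or from incidence structures (e.g. a ``$\pm1$ Hadamard-like'' construction), and I would cite or reconstruct such an example. The main obstacle is the second-moment/scale-counting bookkeeping in (\ref{gin2}) — ensuring the tail of small coordinates is genuinely negligible and that the number of active scales is logarithmic and not, say, $\log(mn/\left\Vert A\right\Vert_\boxdot)$ which could in principle be larger; resolving this cleanly is where most of the work lies.
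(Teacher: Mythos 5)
There is a genuine gap in your argument for (\ref{gin1}): the layer-cake rounding you describe only reproduces the estimate $\left\vert\langle A\mathbf{x},\mathbf{y}\rangle\right\vert\leq 4\left\Vert\mathbf{x}\right\Vert_\infty\left\Vert\mathbf{y}\right\Vert_\infty\left\Vert A\right\Vert_\square mn$ (which is exactly the paper's Lemma \ref{le1}, also essentially in Frieze--Kannan). With $\left\Vert\mathbf{x}\right\Vert_\infty,\left\Vert\mathbf{y}\right\Vert_\infty\leq1$ for unit Euclidean vectors this gives $\sigma_1(A)\leq4\left\Vert A\right\Vert_\square mn$, which is \emph{not} (\ref{gin1}), and neither the ``interpolation'' with $\sigma_1\leq\sqrt{mn}\left\vert A\right\vert_\infty$ (geometric mean yields the wrong power of $mn$) nor the unspecified ``split $A$ and balance'' gives the square root. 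The step you are missing is the paper's $A^*A$ maneuver: $\sigma_1^2(A)=\mu_1(A^*A)\leq\max_i\sum_j\left\vert(A^*A)_{ij}\right\vert$, and this row sum can be rewritten as $\langle A\mathbf{x},\mathbf{y}\rangle$ with $\mathbf{x}$ a vector of phases (so $\left\Vert\mathbf{x}\right\Vert_\infty\leq1$) and $\mathbf{y}$ the $i$-th column of $A$ (so $\left\Vert\mathbf{y}\right\Vert_\infty\leq\left\vert A\right\vert_\infty$). Applying the bilinear bound \emph{here} yields $\sigma_1^2\leq16\left\vert A\right\vert_\infty\left\Vert A\right\Vert_\square mn$, i.e.\ (\ref{gin1.1}), and the same with constant $4$ for real $A$ gives (\ref{gin1}). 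Your real/imaginary decomposition for (\ref{gin1.1}) (using $\left\Vert\operatorname{Re}A\right\Vert_\square,\left\Vert\operatorname{Im}A\right\Vert_\square\leq\left\Vert A\right\Vert_\square$ and $\sigma_1(A)\leq\sigma_1(\operatorname{Re}A)+\sigma_1(\operatorname{Im}A)$) is valid and would work once (\ref{gin1}) is in hand; the paper instead feeds complex vectors into the (complex) bilinear lemma, but the constants come out the same either way.

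For (\ref{gin2}) your dyadic-scale decomposition is a legitimate alternative to the paper's route, which instead quotes an approximation lemma (Lemma \ref{lapp}, from \cite{BoNi04}) to replace $\mathbf{x},\mathbf{y}$ by vectors taking only $O(\log n)$, $O(\log m)$ distinct values, forms the small $p\times q$ block-average matrix $C$ with $\left\vert c_{ij}\right\vert\leq\left\Vert A\right\Vert_\boxdot$, and finishes with $\sigma_1(C)\leq\left\Vert C\right\Vert_F\leq\sqrt{pq}\max\left\vert c_{ij}\right\vert$. That last step is worth absorbing: it is precisely how one gets $\sqrt{\log m\log n}$ rather than $\log m\log n$ without having to be clever about the double sum. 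Your version can also be made to work, but you need to apply Cauchy--Schwarz across scales rather than a termwise bound on the $O(\log m\log n)$ pairs, and you need to handle the truncation of tiny coordinates cleanly; you correctly flag this as the hard part but do not resolve it.

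Finally, your tightness example for (\ref{gin1}) is wrong: for a random $\pm1$ matrix of size $n$ one has $\sigma_1\asymp\sqrt{n}$ while $\left\vert A\right\vert_\infty=1$ and $\left\Vert A\right\Vert_\square n^2\asymp n^{3/2}$, so $\sqrt{\left\vert A\right\vert_\infty\left\Vert A\right\Vert_\square n^2}\asymp n^{3/4}\gg\sigma_1$; the inequality is very far from sharp there. The paper's extremal matrix is a ``star'': a symmetric $(2n+1)\times(2n+1)$ matrix whose first row and column are $+1$ on half the off-diagonal entries and $-1$ on the other half and whose remaining entries vanish, for which $\sigma_1=\sqrt{2n}$ and $\left\Vert A\right\Vert_\square(2n+1)^2=2n$, matching (\ref{gin1}) up to the factor $2$. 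For (\ref{gin2}) the paper uses the explicit matrix $a_{ij}=(ij)^{-1/2}$, not a tensor-power construction.
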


Inequalities (\ref{gin1}) and (\ref{gin2}) can be inverted to some extent.
Indeed, Schur's identity $\sigma_{1}\left(  A\right)  =\left\Vert A\right\Vert
_{2}$ (\cite{Sch12}) implies that
\begin{equation}
\sigma_{1}\left(  A\right)  \geq\left\Vert A\right\Vert _{\boxdot},
\label{gin3}%
\end{equation}
and consequently, $\sigma_{1}\left(  A\right)  \geq\left\Vert A\right\Vert
_{\square}\sqrt{mn}.$

In turn, inequality (\ref{gin3}) implies an extension of the Expander Mixing
Lemma, including its bipartite version which is implicit in Gowers
\cite{Gow08}, Lemma 2.9. For convenience, we restate this graph-theoretical
result:\medskip

\emph{Let }$G$ \emph{be a bipartite graph with vertex classes }$U$ \emph{and
}$V,$ \emph{and let }$A$\emph{\ be its biadjacency matrix. Suppose that }$G$
\emph{is semiregular, i.e., vertices belonging to the same vertex class have
the same degree.} \emph{Then }%
\[
\sigma_{2}\left(  A\right)  \geq\max_{S\subset U,\text{ }R\subset V,\text{
}S,R\neq\varnothing}\frac{1}{\sqrt{\left\vert S\right\vert \left\vert
R\right\vert }}\left\vert e\left(  S,R\right)  -\frac{e\left(  U,V\right)
}{\left\vert U\right\vert \left\vert V\right\vert }\left\vert S\right\vert
\left\vert R\right\vert \right\vert .
\]
Here $e\left(  X,Y\right)  $ stands for the number of edges $uv$ such that
$u\in X,v\in Y.$ The theorem below extends the Expander Mixing Lemma to any
matrices. Note that for nonnegative matrices essentially the same result has
been obtained by Butler in \cite{But06}, Theorem 1.

For $\mathbf{x}=\left(  x_{1},\ldots,x_{n}\right)  \in\mathbb{C}^{n},$
$\mathbf{y}=\left(  y_{1},\ldots,y_{m}\right)  \in\mathbb{C}^{m},$ write
$\mathbf{y\otimes x}$ for the $m\times n$ matrix $\left[  y_{i}x_{j}\right]
.$

\begin{theorem}
\label{EML}Let $A\in\mathcal{M}_{m,n}$ and let $\mathbf{x}\in\mathbb{C}^{n},$
$\mathbf{y}\in\mathbb{C}^{m}$ be two unit vectors such that $\sigma_{1}\left(
A\right)  =\left\langle A\mathbf{x},\mathbf{y}\right\rangle .$ Then
\[
\sigma_{2}\left(  A\right)  \geq\left\Vert A-\sigma_{1}\left(  A\right)
\overline{\mathbf{y}}\mathbf{\otimes x}\right\Vert _{\boxdot}.
\]
In particular, if $A$ is a nonnegative matrix with equal row sums and equal
column sums, then
\[
\sigma_{2}\left(  A\right)  \geq\left\Vert A-\rho\left(  A\right)
J_{m,n}\right\Vert _{\boxdot}.
\]

\end{theorem}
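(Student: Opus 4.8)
The plan is to obtain the bound from inequality (\ref{gin3}), which states $\|B\|_{\boxdot}\le\sigma_1(B)$ for every $B\in\mathcal{M}_{m,n}$; applied with $B=A-\sigma_1(A)\,\overline{\mathbf y}\otimes\mathbf x$, the first assertion follows once we show $\sigma_1\bigl(A-\sigma_1(A)\,\overline{\mathbf y}\otimes\mathbf x\bigr)\le\sigma_2(A)$. The first step is to exploit the hypothesis: since $\mathbf x$ and $\mathbf y$ are unit vectors, $\sigma_1(A)=\langle A\mathbf x,\mathbf y\rangle\le\|A\mathbf x\|\,\|\mathbf y\|\le\sigma_1(A)$, so every inequality here is an equality. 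From $\|A\mathbf x\|=\sigma_1(A)$ and the fact that $\sigma_1(A)^2$ is the largest eigenvalue of the positive semidefinite matrix $A^{\ast}A$, it follows that $\mathbf x$ is a unit eigenvector of $A^{\ast}A$ for $\sigma_1(A)^2$; from the equality case of the Cauchy--Schwarz inequality, $A\mathbf x$ is a positive scalar multiple of $\mathbf y$, and the value $\langle A\mathbf x,\mathbf y\rangle=\sigma_1(A)$ pins the scalar down. Hence $\mathbf x$ and $\mathbf y$ form a pair of leading singular vectors of $A$.

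The core step is then that $A-\sigma_1(A)\,\overline{\mathbf y}\otimes\mathbf x$ is obtained from $A$ by deleting the rank-one summand that $\mathbf x,\mathbf y$ contribute to a singular value decomposition of $A$, so its largest singular value is $\sigma_2(A)$. I would establish this either by completing $\mathbf x$ to an orthonormal eigenbasis $\mathbf x=\mathbf x_1,\mathbf x_2,\dots$ of $A^{\ast}A$ and setting $\mathbf y_k=A\mathbf x_k/\sigma_k(A)$ (so $\mathbf y_1=\mathbf y$), thereby exhibiting the decomposition $A=\sum_k\sigma_k(A)\,\overline{\mathbf y_k}\otimes\mathbf x_k$ and observing that removing the $k=1$ term leaves $\sum_{k\ge2}\sigma_k(A)\,\overline{\mathbf y_k}\otimes\mathbf x_k$; or by verifying the identity $\bigl(A-\sigma_1(A)\,\overline{\mathbf y}\otimes\mathbf x\bigr)\mathbf z=A\bigl(\mathbf z-\langle\mathbf z,\mathbf x\rangle\mathbf x\bigr)$ directly and then invoking the Courant--Fischer characterization of the second largest eigenvalue of $A^{\ast}A$ over the orthogonal complement of $\mathbf x$. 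With either route $\sigma_1\bigl(A-\sigma_1(A)\,\overline{\mathbf y}\otimes\mathbf x\bigr)=\sigma_2(A)$, and (\ref{gin3}) completes the proof of the first assertion.

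For the ``in particular'' statement, take $\mathbf x=n^{-1/2}\mathbf 1_n$ and $\mathbf y=m^{-1/2}\mathbf 1_m$. If $A\ge0$ has all row sums equal to $r$ and all column sums equal to $c$ (so $mr=nc=\Sigma(A)$), then $A\mathbf x=rn^{-1/2}\mathbf 1_m=r\sqrt{m/n}\,\mathbf y$, and every row sum of the nonnegative matrix $A^{\ast}A$ equals $rc$, whence $\sigma_1(A)^2\le rc$; since $rc$ is also an eigenvalue of $A^{\ast}A$ (with eigenvector $\mathbf 1_n$), we get $\sigma_1(A)=\sqrt{rc}=r\sqrt{m/n}$, so $\langle A\mathbf x,\mathbf y\rangle=\sigma_1(A)$ and the hypothesis applies. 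A short computation gives $\sigma_1(A)\,\overline{\mathbf y}\otimes\mathbf x=(mn)^{-1/2}\sqrt{rc}\,J_{m,n}=(r/n)J_{m,n}=\rho(A)J_{m,n}$, and the first assertion yields $\sigma_2(A)\ge\|A-\rho(A)J_{m,n}\|_{\boxdot}$. The one point requiring care is the core step — that subtracting $\sigma_1(A)\,\overline{\mathbf y}\otimes\mathbf x$ removes exactly the leading singular value, with the conjugations placed consistently with the paper's inner-product convention, and that this remains correct when $\sigma_1(A)$ is a repeated singular value, which the Rayleigh-quotient argument handles uniformly. Everything else is routine computation.
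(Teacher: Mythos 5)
Your proof is correct and follows essentially the same route as the paper: recognize that subtracting $\sigma_1(A)\,\overline{\mathbf y}\otimes\mathbf x$ removes the leading rank-one term from a singular value decomposition of $A$, so the remainder has top singular value $\sigma_2(A)$, and then apply inequality (\ref{gin3}). You go a bit further than the paper by explicitly verifying, via the equality case of Cauchy--Schwarz, that the hypothesis $\sigma_1(A)=\langle A\mathbf x,\mathbf y\rangle$ forces $\mathbf x,\mathbf y$ to form a leading singular pair, a step the paper treats as immediate.
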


Along this line, inequalities (\ref{gin1}) and (\ref{gin2}) imply the
following upper bounds on $\sigma_{2}.$

\begin{theorem}
\label{CEML}For every $A\in\mathcal{M}_{m,n}$ we have
\begin{equation}
\sigma_{2}\left(  A\right)  \leq4\sqrt{\left\Vert A-\rho\left(  A\right)
J_{m,n}\right\Vert _{\square}mn} \label{sin1}%
\end{equation}
and%
\begin{equation}
\sigma_{2}\left(  A\right)  <C\left\Vert A-\rho\left(  A\right)
J_{m,n}\right\Vert _{\boxdot}\sqrt{\log m\log n} \label{sin2}%
\end{equation}
for some positive $C<10^{5}.$ Inequalities (\ref{sin1}) and (\ref{sin2}) are
tight up to constant factors.
\end{theorem}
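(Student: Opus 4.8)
The plan is to derive Theorem~\ref{CEML} from Theorems~\ref{th0} and~\ref{EML} by applying the upper bounds on $\sigma_1$ to an auxiliary matrix whose largest singular value equals $\sigma_2(A)$. The natural candidate is $B = A - \sigma_1(A)\,\overline{\mathbf y}\otimes\mathbf x$, where $\mathbf x,\mathbf y$ are the unit singular vectors realizing $\sigma_1(A)=\langle A\mathbf x,\mathbf y\rangle$ as in Theorem~\ref{EML}. Subtracting the rank-one piece $\sigma_1(A)\,\overline{\mathbf y}\otimes\mathbf x$ from the SVD of $A$ leaves a matrix whose singular values are $\sigma_2(A)\ge\sigma_3(A)\ge\cdots$, so $\sigma_1(B)=\sigma_2(A)$. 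Applying~\eqref{gin1.1} (or~\eqref{gin1} in the real case, which is why the constant in~\eqref{sin1} can be taken as $4$) and~\eqref{gin2} to $B$ then gives
\[
\sigma_2(A)=\sigma_1(B)\le 4\sqrt{|B|_\infty\,\|B\|_\square\, mn},\qquad
\sigma_2(A)=\sigma_1(B)<C\,\|B\|_\boxdot\sqrt{\log m\log n}.
\]
So the task reduces to bounding $|B|_\infty$, $\|B\|_\square$ and $\|B\|_\boxdot$ by the corresponding quantities for $A-\rho(A)J_{m,n}$.

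The second step is to handle the normalization: the theorem is stated with $A-\rho(A)J_{m,n}$, not with $B$. For the $\boxdot$ bound this is essentially immediate, since adding a multiple of $J_{m,n}$ changes $\|\cdot\|_\boxdot$ in a controlled way and, more to the point, $\|A-\rho(A)J_{m,n}\|_\boxdot = \|B - (\rho(A)J_{m,n} - \sigma_1(A)\overline{\mathbf y}\otimes\mathbf x)\|_\boxdot$; one wants $\|B\|_\boxdot\le\|A-\rho(A)J_{m,n}\|_\boxdot$ up to the absorbed constant. The cleanest route is to observe that for \emph{any} rank-one matrix $\mathbf v\otimes\mathbf w$ and any $X,Y$, the quantity $\Sigma\bigl((\mathbf v\otimes\mathbf w)[X,Y]\bigr)$ factors, and then to compare the cut-functionals of $B$ and of $A-\rho(A)J_{m,n}$ directly, noting both are obtained from $A$ by subtracting a rank-one matrix and that the cut-norm of $A$ minus its ``best rank-one approximation in the relevant sense'' is minimized appropriately; alternatively, invoke Theorem~\ref{EML} itself, which already gives $\sigma_2(A)\ge\|A-\sigma_1(A)\overline{\mathbf y}\otimes\mathbf x\|_\boxdot=\|B\|_\boxdot$, together with the general inequality $\|M\|_\square\le\|M-\rho(M)J\|_\square$-type manipulations. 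The point is that replacing $\sigma_1(A)\overline{\mathbf y}\otimes\mathbf x$ by $\rho(A)J_{m,n}$ costs only a bounded factor in both norms, which gets absorbed into the constants $4$ and $C$.

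The third step is to control $|B|_\infty$, which is the only quantity in~\eqref{sin1} that is not manifestly comparable to something involving $A-\rho(A)J_{m,n}$. Here I would use the standard bound $|B|_\infty\le\|B\|_2\cdot\max_{i,j}\|\mathbf e_i\|\|\mathbf e_j\| = \sigma_2(A)$ — indeed each entry $b_{ij}=\langle B\mathbf e_j,\mathbf e_i\rangle\le\sigma_1(B)=\sigma_2(A)$ — but that would make~\eqref{sin1} circular, so instead observe that~\eqref{gin1} applied to $B$ combined with $|B|_\infty\le\sigma_2(A)=\sigma_1(B)$ gives $\sigma_1(B)\le 2\sqrt{\sigma_1(B)\|B\|_\square mn}$, hence $\sigma_1(B)\le 4\|B\|_\square mn$; wait — that is not the form we want either. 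The correct move is: from $\sigma_2(A)^2=\sigma_1(B)^2\le 4|B|_\infty\|B\|_\square mn$ and $|B|_\infty\le\sigma_2(A)$ we get $\sigma_2(A)\le 4\|B\|_\square mn$, and then one uses $\|B\|_\square\le\|A-\rho(A)J_{m,n}\|_\square$ — but this last inequality need not hold. So the genuinely delicate point, and the main obstacle, is precisely relating $\|B\|_\square$ and $|B|_\infty$ to the clean quantity $\|A-\rho(A)J_{m,n}\|_\square$; the resolution is to note that $\sqrt{|B|_\infty\|B\|_\square}\le C'\|B-\rho(B)J_{m,n}\|_\square^{1/2}\cdot(\text{something})$ is not what is needed, and instead to bound $|B|_\infty$ by $|A-\rho(A)J_{m,n}|_\infty$ directly is false too. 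In fact the cleanest fix is to replace~\eqref{gin1} by its self-improved consequence applied to $B$: since $|B|_\infty\le\sigma_1(B)$, \eqref{gin1.1} reads $\sigma_1(B)^2\le 16\sigma_1(B)\|B\|_\square mn$, i.e. $\sigma_1(B)\le 16\|B\|_\square mn$, and then relate $\|B\|_\square$ to $\|A-\rho(A)J_{m,n}\|_\square$ by the rank-one subtraction argument of Step~2; this, after re-examining constants, yields~\eqref{sin1} with constant $4$ in the real case. Finally, tightness up to constant factors follows by exhibiting explicit matrices — e.g. scaled versions of the examples already used to show tightness in Theorem~\ref{th0} (block matrices / random sign matrices), embedded so that $\rho=0$ and $\sigma_1$ is small while $\sigma_2$ is large — so that both sides of~\eqref{sin1} and~\eqref{sin2} are of the same order.
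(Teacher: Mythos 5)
There is a genuine gap, and it comes from your choice of auxiliary matrix. You deflate $A$ by its true top singular component, $B=A-\sigma_{1}\left(  A\right)  \overline{\mathbf{y}}\otimes\mathbf{x}$, which indeed gives $\sigma_{1}\left(  B\right)  =\sigma_{2}\left(  A\right)  $, but then the whole proof hinges on comparing $\left\Vert B\right\Vert _{\square}$, $\left\Vert B\right\Vert _{\boxdot}$ and $\left\vert B\right\vert _{\infty}$ with the quantities $\left\Vert A-\rho\left(  A\right)  J_{m,n}\right\Vert _{\square}$, $\left\Vert A-\rho\left(  A\right)  J_{m,n}\right\Vert _{\boxdot}$ appearing in \eqref{sin1} and \eqref{sin2}. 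You yourself concede, repeatedly, that the needed comparisons ``need not hold'' or are ``false,'' and in the end you fall back on the unproved Step~2 assertion that swapping the rank-one matrix $\sigma_{1}\left(  A\right)  \overline{\mathbf{y}}\otimes\mathbf{x}$ for $\rho\left(  A\right)  J_{m,n}$ ``costs only a bounded factor in both norms.'' That assertion is precisely the difficulty, no argument is given for it, and none is apparent. Moreover, your attempted repair of \eqref{sin1} via $\left\vert B\right\vert _{\infty}\leq\sigma_{1}\left(  B\right)  $ produces a bound of the form $\sigma_{2}\left(  A\right)  \leq16\left\Vert B\right\Vert _{\square}mn$, which is linear in the cut-norm rather than of the square-root form of \eqref{sin1}, and is typically much weaker; the claim that ``after re-examining constants'' this yields \eqref{sin1} with constant $4$ is unsupported. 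The tightness discussion is likewise only a sketch.

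The missing idea, which is the paper's entire proof, is to perturb in the other direction: write $A=X+Y$ with $Y=\rho\left(  A\right)  J_{m,n}$ and $X=A-\rho\left(  A\right)  J_{m,n}$, and use Weyl's inequality for singular values,
\[
\sigma_{2}\left(  X+Y\right)  \leq\sigma_{1}\left(  X\right)  +\sigma
_{2}\left(  Y\right)  ,
\]
together with $\sigma_{2}\left(  Y\right)  =0$ (since $Y$ has rank one), to get $\sigma_{2}\left(  A\right)  \leq\sigma_{1}\left(  A-\rho\left(  A\right)  J_{m,n}\right)  $ outright. Then \eqref{gin1} (or \eqref{gin1.1}) and \eqref{gin2} from Theorem \ref{th0} are applied directly to the matrix $A-\rho\left(  A\right)  J_{m,n}$, so the cut-norms in the final bounds are exactly the ones in the statement and no comparison between residuals of different rank-one subtractions is ever needed. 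Your matrix $B$ is the right object for the lower bound of Theorem \ref{EML}, but for the upper bounds of Theorem \ref{CEML} it forces an obstacle that the Weyl-inequality route simply avoids; for tightness of \eqref{sin2} the paper gives a concrete $2n\times2n$ block construction built from $b_{ij}=\left(  ij\right)  ^{-1/2}$, rather than a generic appeal to earlier examples.
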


In the above general matrix setup, inequality (\ref{sin2}) is new, but for
Hermitian matrices it is known from \cite{BoNi04}. For regular graphs somewhat
better results were obtained by Bilu and Linial \cite{BiLi06}, and for
nonnegative matrices, by Butler \cite{But06}. On the other hand, inequality
(\ref{sin1}) is entirely new; while it seems less subtle than (\ref{sin2}), it
is much easier to use.

\subsection{Extending the cut-distance to complex matrices}

Following the general idea of cut-distance for graphs, we shall define
cut-distance for arbitrary matrices. Note that, in fact, Lov\'{a}sz and his
coauthors have defined the cut-distance for real measurable functions
$f:\left[  0,1\right]  ^{2}\rightarrow\mathbb{R,}$ in particular for real
symmetric matrices. For Hermitian matrices we follow their footprints, but for
arbitrary complex matrices we make a necessary adjustment, producing in fact a
slightly different version of the cut-distance, even for graphs.

\subsubsection*{The cut-distance of Hermitian matrices}

Given $A=\left[  a_{ij}\right]  \in\mathcal{H}_{n}$ and integer $p\geq1,$ let
\[
A^{\left(  p\right)  }=A\otimes J_{p},
\]
where $\otimes$ denotes the Kronecker product. Thus, $A^{\left(  p\right)  }$
is obtained by replacing each entry $a_{ij}$ with the matrix $a_{ij}J_{p}.$
Note that $A^{\left(  p\right)  }\in\mathcal{H}_{np}$. Now, for every
$A,B\in\mathcal{H}_{n},$ define$\ \widehat{\delta}_{\square}\left(
A,B\right)  $ as%
\[
\widehat{\delta}_{\square}\left(  A,B\right)  =\min\left\{  \left\Vert
A-PBP^{-1}\right\Vert _{\square}:P\in\mathcal{P}_{n}\right\}  .
\]
Finally, extend the function $\widehat{\delta}_{\square}\left(  A,B\right)  $
to matrices of different sizes as follows: for every $A\in\mathcal{H}_{n}$ and
$B\in\mathcal{H}_{m},$ define the cut-distance $\delta_{\square}\left(
A,B\right)  $ as%
\[
\delta_{\square}\left(  A,B\right)  =\lim_{k\rightarrow\infty}\widehat{\delta
}_{\square}\left(  A^{\left(  km\right)  },B^{\left(  kn\right)  }\right)  .
\]
It is not immediate, but is rather simple to see that the limit above exists,
and moreover,
\[
\delta_{\square}\left(  A,B\right)  =\inf_{k}\widehat{\delta}_{\square}\left(
A^{\left(  km\right)  },B^{\left(  kn\right)  }\right)  .
\]
Note also that the function $\delta_{\square}\left(  A,B\right)  $ is
symmetric and satisfies the triangle inequality%
\[
\delta_{\square}\left(  A,B\right)  \leq\delta_{\square}\left(  A,C\right)
+\delta_{\square}\left(  C,B\right)
\]
for all Hermitian matrices $A,B,C.$ However, $\delta_{\square}\left(
A^{\left(  p\right)  },A^{\left(  q\right)  }\right)  =0,$ and so,
$\delta_{\square}\left(  \cdot,\cdot\right)  $ is not a true metric, but only
a pre-metric.

\subsubsection*{The cut-distance of arbitrary matrices}

The matrix setup allows an easy modification of $\delta_{\square}\left(
\cdot,\cdot\right)  $ for arbitrary complex matrices. Given $A=\left[
a_{ij}\right]  \in\mathcal{M}_{m,n}$ and two positive integers $p,q,$ let
\[
A^{\left(  p,q\right)  }=A\otimes J_{p,q}.
\]
Note that $A^{\left(  p,q\right)  }\in\mathcal{M}_{mp,nq}$. Now, for every
$A,B$ $\in\mathcal{M}_{m,n},$ define$\ \widehat{\delta}_{\boxminus}\left(
A,B\right)  $ as%
\[
\widehat{\delta}_{\boxminus}\left(  A,B\right)  =\min\left\{  \left\Vert
A-PBQ\right\Vert _{\square}:P\in\mathcal{P}_{m},\text{ }Q\in\mathcal{P}%
_{n}\right\}
\]
Finally, extend the function $\widehat{\delta}_{\square}\left(  A,B\right)  $
to matrices of different sizes as follows: for every $A\in\mathcal{M}_{m,n}$
and $B\in\mathcal{M}_{r,s},$ define $\delta_{\boxminus}\left(  A,B\right)  $
as%
\[
\delta_{\boxminus}\left(  A,B\right)  =\lim_{k\rightarrow\infty}%
\widehat{\delta}_{\boxminus}\left(  A^{\left(  kr,ks\right)  },B^{\left(
km,kn\right)  }\right)  .
\]
As in the case of Hermitian matrices, the above limit exists and we have
\[
\delta_{\boxminus}\left(  A,B\right)  =\inf_{k}\widehat{\delta}_{\boxminus
}\left(  A^{\left(  kr,ks\right)  },B^{\left(  km,kn\right)  }\right)  .
\]
Also, the function $\delta_{\boxminus}\left(  \cdot,\cdot\right)  $ is
symmetric and satisfies the triangle inequality, but is only a pre-metric.

Note that now, for Hermitian matrices we have two cut-distances:
$\delta_{\square}\left(  \cdot,\cdot\right)  $ and $\delta_{\boxminus}\left(
\cdot,\cdot\right)  .$ It is not difficult to prove that%
\[
\delta_{\boxminus}\left(  A,B\right)  \leq\delta_{\square}\left(  A,B\right)
\leq2\delta_{\boxminus}\left(  A,B\right)
\]
for every two Hermitian matrices $A$ and $B$.

\subsection{The spectral difference of matrices}

Having inequality (\ref{gin1}) and the definition of $\delta_{\square}\left(
\cdot,\cdot\right)  $ in hand, we can bound the difference of corresponding
eigenvalues of two Hermitian matrices $A$ and $B$ in terms of $\delta
_{\square}\left(  A,B\right)  .$ The main difficulties here come from the fact
that $A$ and $B$ can be of different size and consequently have a different
number of eigenvalues. But even when $A$ and $B$ are of the same size, there
may be complications due to a huge difference in the number of their positive
eigenvalues. Thus, the theorem below gives two conclusions from the same
premise: one when eigenvalue signs are taken into account (clauses ii.a and
ii.b), and one when they are not (clause i ).

\begin{theorem}
\label{th2}Let $n\geq m\geq1,$ and let $A\in\mathcal{H}_{n},$ $B\in
\mathcal{H}_{m}$ satisfy $\left\vert A\right\vert _{\infty}=\left\vert
B\right\vert _{\infty}=1.$ Then

(i) for every $i=1,\ldots,\left\lceil m/2\right\rceil ,$ we have%
\begin{align*}
\left\vert \frac{\mu_{i}\left(  A\right)  }{n}-\frac{\mu_{i}\left(  B\right)
}{m}\right\vert  &  \leq\frac{1}{\sqrt{n/2}}+\frac{1}{\sqrt{m/2}}%
+6\delta_{\square}\left(  A,B\right)  ^{1/2}\\
\left\vert \frac{\mu_{n-i+1}\left(  A\right)  }{n}-\frac{\mu_{m-i+1}\left(
B\right)  }{m}\right\vert  &  \leq\frac{1}{\sqrt{n/2}}+\frac{1}{\sqrt{m/2}%
}+6\delta_{\square}\left(  A,B\right)  ^{1/2}%
\end{align*}

(ii.a) if $\mu_{i}\left(  A\right)  \geq0$ and $\mu_{i}\left(  B\right)
\geq0,$ then$,$%
\[
\left\vert \frac{\mu_{i}\left(  A\right)  }{n}-\frac{\mu_{i}\left(  B\right)
}{m}\right\vert \leq6\delta_{\square}\left(  A,B\right)  ^{1/2}%
\]

(ii.b) if $\mu_{i}\left(  A\right)  \leq0$ and $\mu_{i}\left(  B\right)
\leq0,$ then
\[
\left\vert \frac{\mu_{n-i+1}\left(  A\right)  }{n}-\frac{\mu_{m-i+1}\left(
B\right)  }{m}\right\vert \leq6\delta_{\square}\left(  A,B\right)  ^{1/2}.
\]

\end{theorem}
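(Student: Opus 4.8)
The plan is to reduce the statement about eigenvalues of $A$ and $B$ directly to a comparison with a common "inflated" matrix, using the definition of $\delta_\square$ together with inequality (\ref{gin1}) as the quantitative engine. First I would fix $\varepsilon>0$ and, using $\delta_\square(A,B)=\inf_k\widehat\delta_\square(A^{(km)},B^{(kn)})$, choose $k$ and a permutation matrix $P\in\mathcal P_{kmn}$ so that, writing $A'=A^{(km)}$ and $B'=PB^{(kn)}P^{-1}$ (both in $\mathcal H_{kmn}$), we have $\|A'-B'\|_\square\le\delta_\square(A,B)+\varepsilon$. Since $A'=A\otimes J_{km}$ has the same nonzero eigenvalues as $A$ scaled by $km$ (each $\mu_i(A)$ becoming $km\,\mu_i(A)$, with the rest zero), and similarly for $B'$, the ratios $\mu_i(A)/n$ and $\mu_i(B)/m$ are exactly $\mu_i(A')/(kmn)$ and (the matching eigenvalue of) $\mu_\bullet(B')/(kmn)$. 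So everything is now a statement about two Hermitian matrices of the \emph{same} size $N:=kmn$ whose difference has small cut-norm and whose entries are bounded by $1$ in absolute value.

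The core estimate is then: if $C,D\in\mathcal H_N$ with $|C-D|_\infty\le 1$ (in fact $\le 2$, harmlessly) and $\|C-D\|_\square$ small, then corresponding eigenvalues of $C$ and $D$ are close after dividing by $N$. For this I would invoke the Hoffman–Wielandt / Weyl-type perturbation bound $|\mu_i(C)-\mu_i(D)|\le\|C-D\|_2=\sigma_1(C-D)$, and then bound $\sigma_1(C-D)$ via (\ref{gin1}) applied to the real symmetric matrix $C-D$: $\sigma_1(C-D)\le 2\sqrt{|C-D|_\infty\,\|C-D\|_\square\,N^2}\le 2N\sqrt{\|C-D\|_\square}$. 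Dividing by $N$ gives $|\mu_i(C)/N-\mu_i(D)/N|\le 2\sqrt{\|C-D\|_\square}$, which after the identification above yields $|\mu_i(A)/n-\mu_i(B)/m|\le 2\sqrt{\delta_\square(A,B)+\varepsilon}$; letting $\varepsilon\to0$ and absorbing slack into the constant $6$ gives (ii.a), and the analogous argument applied to $-A,-B$ (or to the bottom of the spectrum) gives (ii.b).

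Clause (i) is the subtler part, because Weyl's inequality pairs the $i$-th eigenvalue of $C$ with the $i$-th eigenvalue of $D$, whereas for matrices of genuinely different sizes the "$i$-th eigenvalue" on the two sides need not correspond once signs are not controlled — a matrix can have many more positive eigenvalues than its perturbation. After inflation both live in $\mathcal H_N$, so Weyl still pairs index $i$ with index $i$; the real issue is that $\mu_i(A')$ for $i\le\lceil m/2\rceil$ need \emph{not} equal $km\,\mu_i(A)$, since the $N=kmn$ eigenvalues of $A'$ consist of $\{km\,\mu_j(A):j\le n\}$ padded with zeros, and a zero eigenvalue can slip into position $i$. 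Here is where the extra additive terms $1/\sqrt{n/2}+1/\sqrt{m/2}$ come from: one shows that the $\lceil m/2\rceil$-th largest eigenvalue of $A'$ differs from $km\,\mu_i(A)$ only when $\mu_i(A)$ has already been "crowded out," in which case $|\mu_i(A)|$ itself must be small — at most $\|A\|_F/\sqrt{(\text{index})}\le\sqrt{n^2/(n/2)\cdot}$-type bound, i.e.\ $|\mu_i(A)|/n\le\sqrt{2/n}$ for $i\le\lceil n/2\rceil$ by the pigeonhole $\sum\mu_j(A)^2=\|A\|_F^2\le n^2$. I would make this precise with a short case analysis: either both $\mu_i(A)/n$ and $\mu_i(B)/m$ are nonnegative (resp.\ nonpositive), and (ii.a)/(ii.b) applies, giving the term $6\delta_\square^{1/2}$; or one of them is negative, forcing $|\mu_i(A)|/n$ or $|\mu_i(B)|/m$ into the range $[0,\sqrt{2/n}]$ (resp.\ $[0,\sqrt{2/m}]$) by the eigenvalue-counting bound above, and then the triangle inequality combines the $\sqrt{2/n}+\sqrt{2/m}$ slack with the (ii.a)-type estimate on the nonnegative pieces. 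The bookkeeping for the bottom eigenvalues $\mu_{n-i+1}(A)$ is symmetric.

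I expect the main obstacle to be clause (i): making airtight the claim that the eigenvalue in slot $i$ of the inflated matrix either tracks $km\,\mu_i(A)$ exactly or else corresponds to an eigenvalue of $A$ that is automatically tiny (size $O(1/\sqrt n)$ after normalization), and verifying that after the same argument is run on $B$ the two "tiny" regimes can be compared through a common reference point without losing more than the stated additive terms. The cut-norm / spectral input itself (inflation invariance of $\|\cdot\|_\square$ up to scaling, Weyl's inequality, and (\ref{gin1})) is routine; it is the interleaving of indices across two different dimensions, and the precise constant $6$, that require care.
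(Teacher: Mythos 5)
Your strategy is essentially the same as the paper's: inflate both matrices to a common size, apply Weyl's perturbation bound $\left\vert \mu_i(C)-\mu_i(D)\right\vert\le\sigma_1(C-D)$, control $\sigma_1(C-D)$ via Theorem~\ref{th0}, and separately account for the distortion that inflation introduces in the normalized $i$-th eigenvalue. The paper packages that last piece into Lemma~\ref{prop}, whereas you propose a direct case analysis on the signs of $\mu_i(A)$ and $\mu_i(B)$ — a legitimate variant, since the content of Lemma~\ref{prop} is exactly the dichotomy you describe (if $\mu_i(A)>0$ the inflated eigenvalue tracks $\mu_i(A)$ exactly, and if $\mu_i(A)\le 0$ the inflated $i$-th eigenvalue is a padded zero while $\left\vert\mu_i(A)\right\vert\le\left\Vert A\right\Vert_F/\sqrt{n-i+1}$).

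Two points need correcting. First, you invoke inequality (\ref{gin1}) by calling $C-D$ ``the real symmetric matrix,'' but $\mathcal H_n$ is the class of \emph{complex} Hermitian matrices, so $C-D=A^{(mk)}-PB^{(nk)}P^{-1}$ is Hermitian, not necessarily real. You must use (\ref{gin1.1}), whose constant is $4$ rather than $2$; with $\left\vert C-D\right\vert_\infty\le 2$ this yields $4\sqrt 2\,N\sqrt{\left\Vert C-D\right\Vert_\square}<6N\sqrt{\left\Vert C-D\right\Vert_\square}$, which is precisely where the paper's constant $6$ comes from (the Remark's constant $3$ for real matrices is your $2\sqrt 2$ rounded up). Second, your case split for clause (i) is not quite airtight as stated: in the case where both $\mu_i(A)\le 0$ and $\mu_i(B)\le 0$ you write ``(ii.b) applies,'' but (ii.b) bounds $\left\vert\mu_{n-i+1}(A)/n-\mu_{m-i+1}(B)/m\right\vert$, not $\left\vert\mu_i(A)/n-\mu_i(B)/m\right\vert$. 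What actually closes that case is the magnitude bound alone: both $\left\vert\mu_i(A)\right\vert/n\le\sqrt{2/n}$ and $\left\vert\mu_i(B)\right\vert/m\le\sqrt{2/m}$ for $i\le\lceil m/2\rceil$, so the triangle inequality gives $\sqrt{2/n}+\sqrt{2/m}$ with no Weyl term needed. Likewise, in the mixed case (say $\mu_i(A)<0$, $\mu_i(B)\ge 0$), the magnitude bound handles $\left\vert\mu_i(A)\right\vert/n$, and the Weyl bound against $\mu_i(A^{(mk)})=0$ forces $\mu_i(B)/m\le 6\sqrt{\delta_\square(A,B)}$; you flagged this as the ``main obstacle'' but it goes through cleanly once written out. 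With these two corrections your argument matches the paper's in substance.
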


Since in clause (i) of the above theorem eigenvalue signs are not taken into
account, the undesired term $\left(  n/2\right)  ^{-1/2}+\left(  m/2\right)
^{-1/2}$ appears in the right-hand side. In general, this term seems
unavoidable: indeed, taking $A=B^{\left(  k\right)  }$, we have $\delta
_{\square}\left(  A,B\right)  =0,$ but the difference
\[
\left\vert \frac{\mu_{i}\left(  B^{\left(  k\right)  }\right)  }{mk}-\frac
{\mu_{i}\left(  B\right)  }{m}\right\vert
\]
can be as large as $m^{-1/2}/2,$ say when $B$ is a Paley graph of sufficiently
large order $m$ and $i=\left\lceil m/2\right\rceil +1.$

Fortunately, for singular values, everything goes smoothly.

\begin{theorem}
\label{th3}Let $A\in\mathcal{M}_{m,n}$ and $B\in\mathcal{M}_{r,s}$ satisfy
$\left\vert A\right\vert _{\infty}=\left\vert B\right\vert _{\infty}=1.$ Then
for every $i=1,\ldots,\min\left(  m,n,r,s\right)  ,$ we have%
\[
\left\vert \frac{\sigma_{i}\left(  A\right)  }{\sqrt{mn}}-\frac{\sigma
_{i}\left(  B\right)  }{\sqrt{rs}}\right\vert \leq6\delta_{\boxminus}\left(
A,B\right)  ^{1/2}.
\]

\end{theorem}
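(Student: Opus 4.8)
The plan is to push everything, via the blow-up construction, down to a comparison of singular values of two matrices of the \emph{same} size, and then to combine Weyl's perturbation inequality for singular values with the bound (\ref{gin1.1}) of Theorem \ref{th0}. The point is that for singular values there are no sign issues, so, unlike in Theorem \ref{th2}, a one-line perturbation bound replaces the delicate interlacing argument and the error term $(n/2)^{-1/2}+(m/2)^{-1/2}$ disappears.

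First I would record two invariance facts. (a) Multiplication by permutation matrices preserves singular values: $\sigma_i(PBQ)=\sigma_i(B)$ for $P\in\mathcal P_m$, $Q\in\mathcal P_n$. (b) The blow-up $A^{(p,q)}=A\otimes J_{p,q}$ scales singular values by exactly $\sqrt{pq}$. Indeed, writing $\mathbf 1_k$ for the all-ones column of length $k$, one has $J_{p,q}=\mathbf 1_p\mathbf 1_q^{\ast}$ and hence, by the mixed-product property of the Kronecker product, $A^{(p,q)}=(I_m\otimes\mathbf 1_p)\,A\,(I_n\otimes\mathbf 1_q)^{\ast}=\sqrt{pq}\,W_1AW_2^{\ast}$, where $W_1=p^{-1/2}(I_m\otimes\mathbf 1_p)$ and $W_2=q^{-1/2}(I_n\otimes\mathbf 1_q)$ have orthonormal columns; since left/right multiplication by such matrices preserves singular values, $\sigma_i(A^{(p,q)})=\sqrt{pq}\,\sigma_i(A)$ for all admissible $i$. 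As $A^{(p,q)}$ has size $mp\times nq$, this gives the normalization identity
\[
\frac{\sigma_i(A^{(p,q)})}{\sqrt{mp\cdot nq}}=\frac{\sigma_i(A)}{\sqrt{mn}},
\]
so that the normalized singular value is blow-up invariant — which is exactly why $\delta_\boxminus$ is built from blow-ups.

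Now fix $i$ in the stated range, an integer $k\ge1$, and permutation matrices $P\in\mathcal P_{kmr}$, $Q\in\mathcal P_{kns}$; set $C=A^{(kr,ks)}$ and $D=P\,B^{(km,kn)}\,Q$, both of size $kmr\times kns$. Blowing up and permuting do not change $\lvert\cdot\rvert_\infty$, so $\lvert C-D\rvert_\infty\le\lvert C\rvert_\infty+\lvert D\rvert_\infty=2$. By the standard perturbation bound $\lvert\sigma_i(C)-\sigma_i(D)\rvert\le\sigma_1(C-D)$ and by (\ref{gin1.1}) applied to the complex matrix $C-D$,
\[
\lvert\sigma_i(C)-\sigma_i(D)\rvert\le\sigma_1(C-D)\le 4\sqrt{\lvert C-D\rvert_\infty\,\lVert C-D\rVert_\square\,(kmr)(kns)}\le 4\sqrt2\,\lVert C-D\rVert_\square^{1/2}\,k\sqrt{mnrs}.
\]
Dividing by $k\sqrt{mnrs}$ and using the normalization identity above for $C$ together with $\sigma_i(D)=\sigma_i(B^{(km,kn)})$ yields
\[
\left\lvert\frac{\sigma_i(A)}{\sqrt{mn}}-\frac{\sigma_i(B)}{\sqrt{rs}}\right\rvert\le 4\sqrt2\,\bigl\lVert A^{(kr,ks)}-P\,B^{(km,kn)}\,Q\bigr\rVert_\square^{1/2}.
\]
Taking the minimum over $P,Q$ replaces the right-hand side by $4\sqrt2\,\widehat\delta_\boxminus(A^{(kr,ks)},B^{(km,kn)})^{1/2}$, and then the infimum over $k$ gives $4\sqrt2\,\delta_\boxminus(A,B)^{1/2}<6\,\delta_\boxminus(A,B)^{1/2}$, which is the claim. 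I do not anticipate a real obstacle: the only things needing care are the verification of fact (b) (so the $\sqrt{mn}$-normalization is genuinely blow-up invariant) and bookkeeping the constant, where $\lvert C-D\rvert_\infty\le2$ and the factor $4$ in (\ref{gin1.1}) combine to the required $4\sqrt2<6$.
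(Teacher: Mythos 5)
Your proposal is correct and follows essentially the same route as the paper: blow up both matrices to a common size, apply Weyl's perturbation inequality for singular values together with the bound (\ref{gin1.1}) to the difference (using $\lvert C-D\rvert_\infty\le 2$ to get the constant $4\sqrt2<6$), normalize via the $\sqrt{pq}$-scaling of singular values under blow-up (the paper's Proposition \ref{pro2}), and pass to the infimum over permutations and $k$. The only cosmetic difference is that you prove the scaling fact via the Kronecker mixed-product identity rather than citing it as a separate proposition.
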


\textbf{Remark. }If the matrices in Theorems \ref{th2} and \ref{th3} are real,
the coefficient $6$ in the right-hand side can be replaced by $3.$\textbf{ }

\subsection{Matrix sampling}

Alon, de la Vega, Kannan and Karpinski \cite{AVKK03} came up with a powerful
matrix sampling result, further improved by Borgs, Chayes, Lov\'{a}sz,
S\'{o}s, and Vesztergombi in \cite{BCLSV08}, Theorem 2.9; for convenience we
restate it in a slightly weaker form:\medskip

\emph{Let }$n\geq k\geq1$\emph{ and let }$A$\emph{ be a real symmetric matrix
of size }$n.$\emph{ Let }$B=A\left[  X,X\right]  ,$\emph{ where }$X$\emph{ is
a uniformly random subset of }$\left[  n\right]  $\emph{ of size }$k$\emph{.
Then }%
\[
\delta_{\square}\left(  A,B\right)  <10\left\vert A\right\vert _{\infty
}\left(  \log_{2}k\right)  ^{-1/2}.
\]
\emph{with probability at least }$1-\exp\left(  -k^{2}/\left(  2\log
_{2}k\right)  \right)  .$\medskip

In view of this theorem, we can use Theorems \ref{th2} to derive a spectral
sampling theorem for real symmetric matrices. There is a rich literature
dedicated to this topic, see, e.g., the references of \cite{RuVe07}; we shall
mention only two recent milestones: Chatterjee and Ledoux \cite{ChLe08} proved
that almost all principal submatrices of a Hermitian matrix $A$ have empirical
eigenvalue distribution close to the expected eigenvalue distribution. Prior
to that, Rudelson and Vershynin \cite{RuVe07} have obtained more precise
results, but only for the singular values of special submatrices. Here we take
an intermediate approach. We prove a sampling result about principal
submatrices of real symmetric matrices, bounding all eigenvalues of the sample
submatrix, but not attempting the level of precision as in \cite{RuVe07}. In
addition, our methods are much simpler than the methods of \cite{ChLe08} and
\cite{RuVe07}.

\begin{theorem}
\label{ssamp}Let $n\geq k\geq1$ and $A$ be a real symmetric matrix of size
$n.$ Let $B=A\left[  X,X\right]  ,$ where $X$ is a uniformly random subset of
$\left[  n\right]  $ of size $k$. Then with probability at least
$1-\exp\left(  -k^{2}/\left(  2\log_{2}k\right)  \right)  ,$ for every
$i=1,\ldots,k,$ we have

(i) if $\mu_{i}\left(  B\right)  \geq0,$ then%
\[
\left\vert \frac{\mu_{i}\left(  A\right)  }{n}-\frac{\mu_{i}\left(  B\right)
}{k}\right\vert <30\left(  \log_{2}k\right)  ^{-1/4};
\]

(ii) if $\mu_{i}\left(  B\right)  \leq0,$ then
\[
\left\vert \frac{\mu_{n-k+i}\left(  A\right)  }{n}-\frac{\mu_{i}\left(
B\right)  }{k}\right\vert <30\left(  \log_{2}k\right)  ^{-1/4}.
\]

\end{theorem}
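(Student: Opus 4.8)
The plan is to combine the matrix sampling theorem quoted just before Theorem~\ref{ssamp} with the eigenvalue-comparison bound of Theorem~\ref{th2}. First I would reduce to the normalized case: set $A'=A/|A|_\infty$ and $B'=B/|B|_\infty$, noting that $|B|_\infty\le|A|_\infty$ always holds and that the conclusion is trivially true (both sides being controlled by $1$) when $|A|_\infty$ is, say, within a constant of the target; more carefully, one simply divides through so that $|A'|_\infty=1$, applies the estimates to $A',B'$, and multiplies back by $|A|_\infty\le$ (something), keeping track of the fact that the sampling theorem's bound $\delta_\square(A,B)<10|A|_\infty(\log_2 k)^{-1/2}$ is stated with the factor $|A|_\infty$. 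Actually the cleanest route is to absorb $|A|_\infty$ throughout and verify at the end that the constant works; I would present it assuming $|A|_\infty=1$ and remark that the general case follows by scaling.

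The heart of the argument is then: on the event $\mathcal E$ of probability at least $1-\exp(-k^2/(2\log_2 k))$ given by the sampling theorem, we have $\delta_\square(A,B)<10(\log_2 k)^{-1/2}$, hence
\[
6\,\delta_\square(A,B)^{1/2}<6\sqrt{10}\,(\log_2 k)^{-1/4}<19(\log_2 k)^{-1/4}.
\]
Now I apply Theorem~\ref{th2} with the roles $n\ge m=k$ (note $n\ge k$ is given). For clause (i) of Theorem~\ref{ssamp}, when $\mu_i(B)\ge 0$: if also $\mu_i(A)\ge 0$, clause (ii.a) of Theorem~\ref{th2} gives directly
\[
\left|\frac{\mu_i(A)}{n}-\frac{\mu_i(B)}{k}\right|\le 6\delta_\square(A,B)^{1/2}<19(\log_2 k)^{-1/4}<30(\log_2 k)^{-1/4}.
\]
If instead $\mu_i(A)<0$, then since $\mu_i(B)/k\ge 0>\mu_i(A)/n$, I need an upper bound on $\mu_i(B)/k$ alone. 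This is where clause (i) of Theorem~\ref{th2} enters: it bounds $|\mu_i(A)/n-\mu_i(B)/k|$ by $(n/2)^{-1/2}+(k/2)^{-1/2}+6\delta_\square(A,B)^{1/2}$; since $n\ge k$, the first two terms are at most $2(k/2)^{-1/2}=2\sqrt2\,k^{-1/2}$, which is $o((\log_2 k)^{-1/4})$ and certainly bounded by, say, $(\log_2 k)^{-1/4}$ for all $k$ in the relevant range — and for tiny $k$ the statement is vacuous or trivial since the right side exceeds any normalized eigenvalue difference (bounded by $2$) once $\log_2 k$ is small enough that $30(\log_2 k)^{-1/4}\ge 2$, i.e. $(\log_2 k)^{-1/4}\ge 1/15$. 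So the total is at most $19(\log_2 k)^{-1/4}+(\log_2 k)^{-1/4}+\text{small}<30(\log_2 k)^{-1/4}$. The symmetric clause (ii) of Theorem~\ref{ssamp}, with $\mu_i(B)\le 0$, is handled identically using clauses (ii.b) and the second line of (i) of Theorem~\ref{th2}, with the index shift $m-i+1=k-i+1$ on the $B$ side and $n-i+1$ on the $A$ side — but wait, the statement writes $\mu_{n-k+i}(A)$; I should check that the indexing in Theorem~\ref{th2}'s second line, namely $\mu_{n-i+1}(A)$ vs. $\mu_{m-i+1}(B)$, matches $\mu_{n-k+i}(A)$ vs. $\mu_i(B)$ after relabelling $i\mapsto k-i+1$: indeed $m-i+1\mapsto k-(k-i+1)+1=i$ and $n-i+1\mapsto n-(k-i+1)+1=n-k+i$, so the indices line up exactly.

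The main obstacle, and the only genuinely delicate point, is the mixed-sign case in the passage above — when the submatrix eigenvalue $\mu_i(B)$ is nonnegative but the corresponding $\mu_i(A)$ is negative (or vice versa), so that clauses (ii.a)/(ii.b) of Theorem~\ref{th2} do not directly apply. Here one must fall back on clause (i), swallowing the unavoidable $(n/2)^{-1/2}+(k/2)^{-1/2}$ term; the point is that this term is dominated by $(\log_2 k)^{-1/4}$, so it disappears into the constant $30$ (versus the clean $6\sqrt{10}<19$ one gets in the pure-sign case). I would also remark that for real symmetric $A$ one could use the improved constant $3$ from the Remark after Theorem~\ref{th3}, giving $3\sqrt{10}<10$ in the exponent and a correspondingly smaller constant, but the stated $30$ is comfortably safe and I would not chase the optimum. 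A closing sentence would note that the probability bound is inherited verbatim from the sampling theorem, as we only used it on the single event $\mathcal E$.
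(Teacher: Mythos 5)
Your overall plan — combine the Borgs et al.\ sampling bound with Theorem~\ref{th2} and track constants — is the same as the paper's, and your index bookkeeping for clause~(ii) is correct. But you have a genuine gap at what you yourself flag as ``the only genuinely delicate point,'' the mixed-sign case, and the gap is that the case cannot occur: you never invoke Cauchy's interlacing theorem, which is the step the paper's proof uses precisely to rule it out. Since $B=A[X,X]$ is a $k\times k$ principal submatrix of the $n\times n$ symmetric matrix $A$, interlacing gives
\[
\mu_i(A)\ \ge\ \mu_i(B)\ \ge\ \mu_{n-k+i}(A),\qquad i=1,\ldots,k.
\]
Hence if $\mu_i(B)\ge 0$ then automatically $\mu_i(A)\ge 0$, so clause~(ii.a) of Theorem~\ref{th2} always applies; and if $\mu_i(B)\le 0$ then $\mu_{n-k+i}(A)\le 0$, so clause~(ii.b) applies (with the index shift you correctly worked out). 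There is no mixed-sign case to handle, and the constant $30$ comes cleanly from $6\sqrt{10}<19<30$ (or $3\sqrt{10}<10$ using the Remark's constant $3$ for real matrices).

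Your proposed fallback through clause~(i) of Theorem~\ref{th2} is not only unnecessary but would not close the argument as written: clause~(i) is stated only for $i=1,\ldots,\lceil m/2\rceil$, not for all $i\le k$ (the general estimate in its proof has the term $1/\sqrt{m-i+1}$, which is of order $1$ when $i$ is near $k$, and this is not dominated by $(\log_2 k)^{-1/4}$). Your claim that $2\sqrt{2}\,k^{-1/2}\le(\log_2 k)^{-1/4}$ ``for all $k$ in the relevant range'' also fails for small $k$ (e.g.\ $k=8$); you would then have to lean on the statement being trivially true for $k$ below a gigantic threshold, which, while literally correct, is an awkward patch. The clean fix is exactly the interlacing inequality above, and once you insert it, your argument collapses to the paper's proof.
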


\section{\label{p}Proofs}

\subsection{Proof of Theorem \ref{th0}}

For the proof of inequality (\ref{gin1}) we need a standard lemma that can be
traced back to \cite{FrKa99}. We prove it here for convenience.

\begin{lemma}
\label{le1}Let $A\in\mathcal{M}_{m,n},$ $\mathbf{x}\in\mathbb{R}^{n},$
$\mathbf{y}\in\mathbb{R}^{m}.$ Then%
\begin{equation}
\left\vert \left\langle A\mathbf{x},\mathbf{y}\right\rangle \right\vert
\leq4\left\Vert \mathbf{x}\right\Vert _{\infty}\left\Vert \mathbf{y}%
\right\Vert _{\infty}\left\Vert A\right\Vert _{\square}mn. \label{in1}%
\end{equation}
If $\mathbf{x}\in\mathbb{C}^{n},$ $\mathbf{y}\in\mathbb{C}^{m},$ then%
\begin{equation}
\left\vert \left\langle A\mathbf{x},\mathbf{y}\right\rangle \right\vert
\leq16\left\Vert \mathbf{x}\right\Vert _{\infty}\left\Vert \mathbf{y}%
\right\Vert _{\infty}\left\Vert A\right\Vert _{\square}mn. \label{in2}%
\end{equation}

\end{lemma}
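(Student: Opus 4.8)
The plan is to prove the real case \eqref{in1} first and then reduce the complex case \eqref{in2} to it by splitting into real and imaginary parts. For the real case, the key idea is that the bilinear form $\langle A\mathbf{x},\mathbf{y}\rangle = \sum_{i,j} a_{ij} x_j y_i$ should be expressed as a combination of quantities of the form $\Sigma(A[X,Y])$, each of which is bounded by $\|A\|_\square mn$ in absolute value. First I would normalize: by homogeneity in $\mathbf{x}$ and $\mathbf{y}$ we may assume $\|\mathbf{x}\|_\infty \le 1$ and $\|\mathbf{y}\|_\infty \le 1$, so all coordinates lie in $[-1,1]$.

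The main step is a "layer cake" (distribution function) representation: for a real number $t \in [-1,1]$ write $t = \int_0^1 \bigl(\mathbb{I}[t \ge s] - \mathbb{I}[t \le -s]\bigr)\,ds$. Applying this to each $x_j$ and each $y_i$ and using bilinearity,
\[
\langle A\mathbf{x},\mathbf{y}\rangle = \int_0^1\!\!\int_0^1 \Bigl(\sum_{i,j} a_{ij}\,\varepsilon_i(s)\,\delta_j(t)\Bigr)\,ds\,dt,
\]
where $\varepsilon_i(s) = \mathbb{I}[y_i \ge s] - \mathbb{I}[y_i \le -s] \in \{-1,0,1\}$ and similarly $\delta_j(t)$. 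For fixed $s,t$, the inner sum decomposes according to the signs: splitting $\{i : \varepsilon_i(s) = 1\}$, $\{i : \varepsilon_i(s) = -1\}$ and likewise for $j$, the inner sum is a signed combination $\Sigma(A[X_+,Y_+]) - \Sigma(A[X_+,Y_-]) - \Sigma(A[X_-,Y_+]) + \Sigma(A[X_-,Y_-])$ of four block sums, hence has absolute value at most $4\|A\|_\square mn$ by the definition of the cut-norm. Integrating over the unit square (which has measure $1$) gives \eqref{in1}. I expect the bookkeeping of signs in this decomposition to be the only real obstacle — it is routine but must be done carefully to land exactly on the constant $4$.

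For the complex case \eqref{in2}, I would write $\mathbf{x} = \mathbf{x}_1 + i\mathbf{x}_2$ and $\mathbf{y} = \mathbf{y}_1 + i\mathbf{y}_2$ with $\mathbf{x}_1,\mathbf{x}_2 \in \mathbb{R}^n$, $\mathbf{y}_1,\mathbf{y}_2 \in \mathbb{R}^m$, noting $\|\mathbf{x}_1\|_\infty, \|\mathbf{x}_2\|_\infty \le \|\mathbf{x}\|_\infty$ and similarly for $\mathbf{y}$. Since $A$ itself may be complex here, one also splits $A = A_1 + iA_2$ with $A_1, A_2$ real; but each of $A_1, A_2$ satisfies $\|A_k\|_\square \le \|A\|_\square$ is \emph{not} automatic, so instead I would apply the real estimate to $A$ treated entry-wise via its real and imaginary parts more carefully — or, more cleanly, observe that $\langle A\mathbf{x},\mathbf{y}\rangle$ expands into a sum of terms $\langle A\mathbf{x}_a,\mathbf{y}_b\rangle$ (with $a,b \in \{1,2\}$ and possible factors $\pm i$, noting the conjugate in the inner product), and apply \eqref{in1} to each. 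Collecting the four terms, each bounded by $4\|A\|_\square mn \cdot \|\mathbf{x}\|_\infty\|\mathbf{y}\|_\infty$, yields the constant $16$. The subtlety to watch is that the cut-norm $\|A\|_\square$ of a complex matrix is defined with $|\Sigma(A[X,Y])|$, so the real-case argument as written applies to real matrices only; the reduction must route everything through real matrices obtained from $A$'s real and imaginary parts, and one checks that $\|\operatorname{Re}A\|_\square, \|\operatorname{Im}A\|_\square$ are each at most $\|A\|_\square$ — which does hold, since $|\Sigma(\operatorname{Re}A[X,Y])| = |\operatorname{Re}\Sigma(A[X,Y])| \le |\Sigma(A[X,Y])|$. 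That observation plus four applications of \eqref{in1} finishes \eqref{in2}.
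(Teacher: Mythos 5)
Your proof is correct, and the real case uses a genuinely different route from the paper. The paper observes that $\langle A\mathbf{u},\mathbf{v}\rangle$ is affine in each coordinate of $(\mathbf{u},\mathbf{v})$, so $|\langle A\mathbf{u},\mathbf{v}\rangle|$ (being convex) achieves its maximum over the cube $[-1,1]^{m+n}$ at a vertex; this puts $\mathbf{u},\mathbf{v}\in\{-1,1\}^{\cdot}$ in one step, after which the same four-block decomposition appears. Your layer-cake representation $t=\int_0^1\bigl(\mathbb{I}[t\ge s]-\mathbb{I}[t\le -s]\bigr)\,ds$ accomplishes the same reduction by averaging rather than by extremality, producing coefficients in $\{-1,0,1\}$ for each $(s,t)$; the two methods converge on the identical bound $4\left\Vert A\right\Vert_\square mn$ for the inner quantity. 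Both are standard devices; the convexity argument is a one-step reduction, while the integral representation is perhaps more transparent about where the factor of $4$ comes from. For the complex case, you and the paper do the same thing: split $\mathbf{x},\mathbf{y}$ into real and imaginary parts and apply \eqref{in1} four times.

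One small point worth straightening out in your write-up of \eqref{in2}: your worry that ``the real-case argument as written applies to real matrices only'' is unfounded, and if you acted on it by also splitting $A=\operatorname{Re}A+i\operatorname{Im}A$ you would land on eight terms and the constant $32$, not $16$. Your layer-cake proof of \eqref{in1} never uses that $A$ is real --- it only needs $\mathbf{x},\mathbf{y}$ real so the indicator decomposition of the coordinates makes sense; the resulting four block sums $\Sigma(A[X,Y])$ may be complex, and each is bounded in modulus by $\left\Vert A\right\Vert_\square mn$ by the definition of the cut-norm, which already involves $\left\vert\Sigma(A[X,Y])\right\vert$. So \eqref{in1} as stated holds for complex $A$ with real $\mathbf{x},\mathbf{y}$, and the ``cleaner'' route you floated first --- expanding $\langle A\mathbf{x},\mathbf{y}\rangle$ into the four terms $\langle A\mathbf{x}_a,\mathbf{y}_b\rangle$ and applying \eqref{in1} to each --- is exactly right and is what the paper does. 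Just delete the digression about $\operatorname{Re}A$ and $\operatorname{Im}A$.
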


\begin{proof}
Assume for simplicity that $\left\Vert \mathbf{x}\right\Vert _{\infty
}=\left\Vert \mathbf{y}\right\Vert _{\infty}=1.$ We shall prove first
(\ref{in1}). Since $\left\langle A\mathbf{u},\mathbf{v}\right\rangle $ maps
the cube $\left[  -1,1\right]  ^{m+n}$ linearly in each coordinate of
$\mathbf{u}$ and $\mathbf{v,}$ $\max\left\vert \left\langle A\mathbf{u}%
,\mathbf{v}\right\rangle \right\vert $ is attained for some $\mathbf{u}%
^{\prime}=\left(  u_{1}^{\prime},\ldots,u_{n}^{\prime}\right)  \in\left\{
-1,1\right\}  ^{n},$ and $\mathbf{v}^{\prime}=\left(  v_{1}^{\prime}%
,\ldots,v_{n}^{\prime}\right)  \in\left\{  -1,1\right\}  ^{m}.$ Set
\begin{align*}
R^{+}  &  =\left\{  x:v_{x}^{\prime}=1\right\}  ,\text{ \ }R^{-}=\left\{
x:v_{x}^{\prime}=-1\right\}  ,\\
C^{+}  &  =\left\{  x:u_{x}^{\prime}=1\right\}  ,\text{ \ }C^{-}=\left\{
x:u_{x}^{\prime}=-1\right\}  .
\end{align*}
Now we see that
\begin{align*}
\left\vert \left\langle A\mathbf{u}^{\prime},\mathbf{v}^{\prime}\right\rangle
\right\vert  &  =\left\vert \Sigma\left(  A\left[  R^{+},C^{+}\right]
\right)  +\Sigma\left(  A\left[  R^{-},C^{-}\right]  \right)  -\Sigma\left(
A\left[  R^{+},C^{-}\right]  \right)  -\Sigma\left(  A\left[  R^{-}%
,C^{+}\right]  \right)  \right\vert \\
&  \leq\left\vert \Sigma\left(  A\left[  R^{+},C^{+}\right]  \right)
\right\vert +\left\vert \Sigma\left(  A\left[  R^{-},C^{-}\right]  \right)
\right\vert +\left\vert \Sigma\left(  A\left[  R^{+},C^{-}\right]  \right)
\right\vert +\left\vert \Sigma\left(  A\left[  R^{-},C^{+}\right]  \right)
\right\vert \\
&  \leq4\left\Vert A\right\Vert _{\square}mn,
\end{align*}
completing the proof of (\ref{in1}).

To prove (\ref{in2}), suppose that $\mathbf{x}=\left(  x_{1},\ldots
,x_{n}\right)  ,$ $\mathbf{y}=\left(  y_{1},\ldots,y_{m}\right)  ,$ and set
\begin{align*}
\mathbf{x}_{0}  &  =\left(  \operatorname{Re}x_{1},\ldots,\operatorname{Re}%
x_{n}\right)  ,\text{ }\mathbf{x}_{1}=\left(  \operatorname{Im}x_{1}%
,\ldots,\operatorname{Im}x_{n}\right)  ,\text{ }\\
\mathbf{y}_{0}  &  =\left(  \operatorname{Re}y_{1},\ldots,\operatorname{Re}%
y_{m}\right)  ,\text{ }\mathbf{y}_{1}=\left(  \operatorname{Im}y_{1}%
,\ldots,\operatorname{Im}y_{m}\right)  .
\end{align*}
We have
\begin{align*}
\left\vert \left\langle A\mathbf{x},\mathbf{y}\right\rangle \right\vert  &
=\left\vert \left\langle A\mathbf{x}_{0},\mathbf{y}_{0}\right\rangle
-\left\langle A\mathbf{x}_{0},\mathbf{y}_{1}\right\rangle i+\left\langle
A\mathbf{x}_{1},\mathbf{y}_{0}\right\rangle i+\left\langle A\mathbf{x}%
_{1},\mathbf{y}_{1}\right\rangle \right\vert \\
&  \leq\left\vert \left\langle A\mathbf{x}_{0},\mathbf{y}_{0}\right\rangle
\right\vert +\left\vert \left\langle A\mathbf{x}_{0},\mathbf{y}_{1}%
\right\rangle \right\vert +\left\vert \left\langle A\mathbf{x}_{1}%
,\mathbf{y}_{0}\right\rangle \right\vert +\left\vert \left\langle
A\mathbf{x}_{1},\mathbf{y}_{1}\right\rangle \right\vert .
\end{align*}
Since $\mathbf{x}_{0},\mathbf{x}_{1},\mathbf{y}_{0},\mathbf{y}_{1}$ are real,
inequality (\ref{in2}) follows from (\ref{in1}).
\end{proof}

For the proof of (\ref{gin2}) we need the following lemma, proved in
\cite{BoNi04}.

\begin{lemma}
\label{lapp} Let $p\geq1$, $n\geq1$ and $0<\varepsilon<1$. Then for every
$\mathbf{x}=\left(  x_{1},\ldots,x_{n}\right)  \in\mathbb{C}^{n}$ with
$\left\Vert \mathbf{x}\right\Vert =1$, there is a vector $\mathbf{y}=\left(
y_{1},\ldots,y_{n}\right)  \in\mathbb{C}^{n}$ such that $y_{i}$ take no more
than
\[
\left\lceil \frac{8\pi}{\varepsilon}\right\rceil \left\lceil \frac
{4}{\varepsilon}\log\frac{4n}{\varepsilon}\right\rceil
\]
values and $\left\Vert \mathbf{x}-\mathbf{y}\right\Vert \leq\varepsilon$.
\end{lemma}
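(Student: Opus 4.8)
The plan is to quantize the coordinates of $\mathbf x$ in polar form, using a \emph{relative-error} grid: each $x_j$ is replaced by a nearby point whose distance from $x_j$ is proportional to $|x_j|$. This scale-invariance is exactly what decouples the per-coordinate error from $n$, so that the sum of squared errors stays below $\varepsilon^2$ while the angular part of the grid costs only $O(1/\varepsilon)$ values (no logarithmic factor) and only the radial part pays the $\log(n/\varepsilon)$ price.

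Concretely, I would write $x_j=r_je^{i\theta_j}$ with $0\le r_j\le 1$ (possible since $\|\mathbf x\|=1$) and $\theta_j\in[0,2\pi)$, and fix three parameters: $K=\lceil 8\pi/\varepsilon\rceil$, a multiplicative step $1+\eta$ with $\eta$ a constant multiple of $\varepsilon$, and a truncation threshold $\tau$ a constant multiple of $\varepsilon/\sqrt n$. Then set $y_j=0$ whenever $r_j\le\tau$; otherwise let $\tilde\theta_j$ be a nearest multiple of $2\pi/K$ to $\theta_j$, let $\tilde r_j$ be the largest number of the form $\tau(1+\eta)^\ell$, $\ell=0,1,2,\dots$, that does not exceed $r_j$, and put $y_j=\tilde r_je^{i\tilde\theta_j}$. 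Since $r_j\le 1$, the exponent $\ell$ ranges over at most $\log(1/\tau)/\log(1+\eta)+2$ values, so the coordinates of $\mathbf y$ all lie in one fixed set of at most $\big(\log(1/\tau)/\log(1+\eta)+2\big)K$ numbers: the admissible moduli (including $0$) times the $K$ admissible angles.

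For the error, fix $j$ with $r_j>\tau$. By construction $\tilde r_j\le r_j<(1+\eta)\tilde r_j$, hence $|r_j-\tilde r_j|\le\eta r_j$; also $|\theta_j-\tilde\theta_j|\le\pi/K$, so $|e^{i\theta_j}-e^{i\tilde\theta_j}|\le\pi/K$. Therefore
\[
|x_j-y_j|\le|r_j-\tilde r_j|+\tilde r_j\,|e^{i\theta_j}-e^{i\tilde\theta_j}|\le\Big(\eta+\tfrac{\pi}{K}\Big)r_j ,
\]
whence $\sum_{r_j>\tau}|x_j-y_j|^2\le(\eta+\pi/K)^2\sum_j r_j^2=(\eta+\pi/K)^2$, while $\sum_{r_j\le\tau}r_j^2\le n\tau^2$. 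Since $\pi/K\le\varepsilon/8$, one can choose $\eta$ and $\tau$ of the stated orders so that $n\tau^2+(\eta+\pi/K)^2\le\varepsilon^2$, which gives $\|\mathbf x-\mathbf y\|\le\varepsilon$.

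The one genuine obstacle is calibrating $\eta$ and $\tau$ so that both requirements hold with the \emph{exact} constants in the statement. The counting bound forces $\log(1/\tau)/\log(1+\eta)$ to be at most about $(4/\varepsilon)\log(4n/\varepsilon)$; because $\log(1/\tau)$ is of order $\log n+\log(1/\varepsilon)$ and hence bounded by $\log(4n/\varepsilon)$, while $\log(1+\eta)$ is comparable to $\eta$, this pushes $\eta$ up to a specific constant multiple of $\varepsilon$ — and once $\eta$ is fixed, $\tau$ must be taken as large as the residual budget $n\tau^2\le\varepsilon^2-(\eta+\pi/K)^2$ permits, which is still of order $\varepsilon/\sqrt n$. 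Balancing the three error contributions (radial rounding, angular rounding, truncation) and absorbing the remaining additive terms into the ceiling functions — where, for the few values of $n$ that are too small for the asymptotic estimate, one simply falls back on the trivial fact that at most $n$ distinct values are ever used — is where essentially all the bookkeeping lies; the polar estimate above is the whole idea.
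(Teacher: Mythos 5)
The paper does not actually prove this lemma (it is quoted from \cite{BoNi04}), and your polar-quantization argument --- uniform angular sectors of width $2\pi/\lceil 8\pi/\varepsilon\rceil$ together with a geometric radial grid truncated at scale $\varepsilon/\sqrt{n}$ --- is essentially the standard proof of such statements and is correct. The calibration you leave open does close: taking $\eta=\varepsilon/2$ and $\tau=\varepsilon/(2\sqrt{n})$ gives $\|\mathbf x-\mathbf y\|^{2}\le \varepsilon^{2}/4+(5\varepsilon/8)^{2}<\varepsilon^{2}$ while using at most $\bigl(\tfrac{8}{3\varepsilon}\log\tfrac{4n}{\varepsilon}+2\bigr)\lceil 8\pi/\varepsilon\rceil$ values, which fits under the stated bound for all $n\ge 157$, and for smaller $n$ the stated bound already exceeds $156\ge n$, so your trivial fallback $\mathbf y=\mathbf x$ applies.
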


\begin{proof}
[\textbf{Proof of inequalities (\ref{gin1.1}) and (\ref{gin1})}]We shall prove
first (\ref{gin1.1}). Let $A=\left[  a_{ij}\right]  $, $A^{\ast}=\left[
a_{ij}^{\ast}\right]  ,$ $A^{\ast}A=\left[  b_{ij}\right]  .$ For every
$j\in\left[  n\right]  ,$ set%
\[
\mathbf{c}_{j}=\left(  a_{1j},a_{2j},\ldots,a_{mj}\right)  .
\]
Select $i\in\left[  n\right]  $ so that $\sum_{j\in\left[  n\right]
}\left\vert b_{ij}\right\vert $ is maximal. It is well-known that%
\[
\sigma_{1}^{2}\left(  A\right)  =\mu_{1}\left(  A^{\ast}A\right)  \leq
\sum_{j\in\left[  n\right]  }\left\vert b_{ij}\right\vert .
\]
Note also that
\[
b_{ij}=\sum_{k\in\left[  m\right]  }a_{ik}^{\ast}a_{kj}=\sum_{k\in\left[
m\right]  }a_{kj}\overline{a_{ki}}=\left\langle \mathbf{c}_{j},\mathbf{c}%
_{i}\right\rangle
\]
For every $j\in\left[  n\right]  ,$ set%
\[
x_{j}=\left\{
\begin{array}
[c]{cc}%
\left\vert \left\langle \mathbf{c}_{j},\mathbf{c}_{i}\right\rangle \right\vert
/\left\langle \mathbf{c}_{j},\mathbf{c}_{i}\right\rangle  & \text{if
}\left\langle \mathbf{c}_{j},\mathbf{c}_{i}\right\rangle \neq0\\
0 & \text{if }\left\langle \mathbf{c}_{j},\mathbf{c}_{i}\right\rangle =0
\end{array}
\right.
\]
and let $\mathbf{x}=\left(  x_{1},\ldots,x_{n}\right)  .$ Also for every
$k\in\left[  m\right]  ,$ set $y_{k}=a_{ki}$ and let $\mathbf{y}=\left(
y_{1},\ldots,y_{m}\right)  $. Note that
\[
\sum_{j\in\left[  n\right]  }\left\vert b_{ij}\right\vert =\sum_{j\in\left[
n\right]  }\left\langle \mathbf{c}_{j},\mathbf{c}_{i}\right\rangle x_{j}%
=\sum_{j\in\left[  n\right]  }\sum_{k\in\left[  m\right]  }\overline{a_{ki}%
}a_{kj}x_{j}=\sum_{k\in\left[  m\right]  }\sum_{j\in\left[  n\right]
}\overline{a_{ki}}a_{kj}x_{j}=\left\langle A\mathbf{x},\mathbf{y}\right\rangle
.
\]
Since $\left\vert \mathbf{y}\right\vert _{\infty}\leq1,$ in view of
(\ref{in2}), we see that
\[
\left\langle A\mathbf{x},\mathbf{y}\right\rangle \leq16\left\Vert A\right\Vert
_{\square}\left\vert \mathbf{x}\right\vert _{\infty}\left\vert \mathbf{y}%
\right\vert _{\infty}=16\left\Vert A\right\Vert _{\square}\left\vert
A\right\vert _{\infty},
\]
completing the proof of inequality (\ref{gin1.1}).

Inequality (\ref{gin1}) follows likewise, using (\ref{in1}) instead of
(\ref{in2}).

To prove that inequality (\ref{gin1}) is tight, define a square symmetric
matrix $A=\left[  a_{ij}\right]  $ of size $2n+1,$ by letting%
\[
a_{1i}=a_{i1}=\left\{
\begin{array}
[c]{ll}%
1 & \text{if }2\leq i\leq n+1,\\
-1 & \text{if }n+1<i\leq2n+1
\end{array}
\right.
\]
and let all other entries of $A$ to be $0.$ We easily find that $\sigma
_{1}\left(  A\right)  =\sqrt{2n}.$ Also $\left\Vert A\right\Vert _{\square
}\left(  2n+1\right)  ^{2}=2n,$ for if $X\subset\left[  2n+1\right]  ,$
$Y\subset\left[  2n+1\right]  $ are such that $\left\vert \Sigma\left(
A\left[  X,Y\right]  \right)  \right\vert $ is maximal, the only contributions
to $\left\vert \Sigma\left(  A\left[  X,Y\right]  \right)  \right\vert $ come
from the first row and the first column, and each of them can be at most $n.$
Thus, we have%
\[
\sigma_{1}^{2}\left(  A\right)  =2n=\left\vert A\right\vert _{\infty
}\left\Vert A\right\Vert _{\square}\left(  2n+1\right)  ^{2},
\]
and so inequality (\ref{gin1}) is tight up to a factor of $2$ and inequality
(\ref{gin1.1}) is tight up to a factor of $4$.
\end{proof}

\begin{proof}
[\textbf{Proof of inequality (\ref{gin2})}]By Schur's identity $\sigma
_{1}\left(  A\right)  =\left\Vert A\right\Vert _{2},$ there exists unit
vectors $\mathbf{x}=\left(  x_{1},\ldots,x_{n}\right)  \in\mathbb{C}^{n}$ and
$\mathbf{y}=\left(  y_{1},\ldots,y_{m}\right)  \in\mathbb{C}^{m}$ such that
\[
\sigma_{1}\left(  A\right)  =\left\langle A\mathbf{x},\mathbf{y}\right\rangle
.
\]
Applying Lemma \ref{lapp} with $\varepsilon=1/3,$ we can find vectors%
\[
\mathbf{x}^{\prime}=\left(  x_{1}^{\prime},\ldots,x_{n}^{\prime}\right)
=\text{ \ }\mathbf{y}^{\prime}=\left(  y_{1}^{\prime},\ldots,y_{m}^{\prime
}\right)
\]
such that $x_{i}^{\prime}$ take $p$ distinct values $\alpha_{1},\ldots
,\alpha_{p}$ and $y_{i}^{\prime}$ take $q$ distinct values $\beta_{1}%
,\ldots,\beta_{q},$ and
\begin{align}
\left\Vert \mathbf{x}-\mathbf{x}^{\prime}\right\Vert  &  <1/3,\nonumber\\
\left\Vert \mathbf{y}-\mathbf{y}^{\prime}\right\Vert  &  <1/3,\label{pb}\\
p  &  \leq12\left\lceil 24\pi\right\rceil \left\lceil \log12n\right\rceil
=912\left\lceil \log12n\right\rceil ,\label{qb}\\
q  &  \leq12\left\lceil 24\pi\right\rceil \left\lceil \log12m\right\rceil
=912\left\lceil \log12m\right\rceil .\nonumber
\end{align}
For every $i\in\left[  p\right]  ,$ $j\in\left[  q\right]  ,$ let%
\begin{align*}
N_{i}  &  =\left\{  u:x_{u}^{\prime}=\alpha_{i}\right\}  ,\\
M_{j}  &  =\left\{  u:y_{u}^{\prime}=\beta_{j}\right\}  .
\end{align*}
Clearly, $N_{1}\cup\cdots\cup N_{p}$ and $M_{1}\cup\cdots\cup M_{q}$ are
partitions of $\left[  n\right]  $ and $\left[  m\right]  $.

Our first goal is to prove that
\begin{equation}
\sigma_{1}\left(  A\right)  \leq\frac{9}{2}\left\vert \left\langle
A\mathbf{x}^{\prime},\mathbf{y}^{\prime}\right\rangle \right\vert .
\label{eq4}%
\end{equation}
Indeed, we have
\begin{align*}
\left\vert \left\langle A\mathbf{x},\mathbf{y}\right\rangle -\left\langle
A\mathbf{x}^{\prime},\mathbf{y}^{\prime}\right\rangle \right\vert  &
=\left\vert \left\langle A\mathbf{x},\mathbf{y}\right\rangle -\left\langle
A\mathbf{x}^{\prime},\mathbf{y}\right\rangle +\left\langle A\mathbf{x}%
^{\prime},\mathbf{y}\right\rangle -\left\langle A\mathbf{x}^{\prime
},\mathbf{y}^{\prime}\right\rangle \right\vert \\
&  \leq\left\vert \left\langle A\left(  \mathbf{x-x}^{\prime}\right)
,\mathbf{y}\right\rangle \right\vert +\left\vert \left\langle A\mathbf{x}%
^{\prime}\mathbf{,y-y}^{\prime}\right\rangle \right\vert \\
&  \leq\sigma_{1}\left(  A\right)  \left(  \left\Vert \mathbf{x-x}^{\prime
}\right\Vert \left\Vert \mathbf{y}\right\Vert +\left\Vert \mathbf{x}^{\prime
}\right\Vert \left\Vert \mathbf{y-y}^{\prime}\right\Vert \right) \\
&  \leq\sigma_{1}\left(  A\right)  \left(  \left\Vert \mathbf{x-x}^{\prime
}\right\Vert \left\Vert \mathbf{y}\right\Vert +\left(  \left\Vert
\mathbf{x}\right\Vert +\left\Vert \mathbf{x-x}^{\prime}\right\Vert \right)
\left\Vert \mathbf{y-y}^{\prime}\right\Vert \right) \\
&  \leq\sigma_{1}\left(  A\right)  \left(  2+\frac{1}{3}\right)  \frac{1}%
{3}\leq\frac{7}{9}\sigma_{1}\left(  A\right)  ,
\end{align*}
implying that%
\[
\left\vert \sigma_{1}\left(  A\right)  \right\vert -\left\vert \left\langle
A\mathbf{x}^{\prime}\mathbf{,y}^{\prime}\right\rangle \right\vert \leq\frac
{7}{9}\sigma_{1}\left(  A\right)  .
\]
and inequality (\ref{eq4}) follows.

Now, define the matrix $C=\left[  c_{ij}\right]  \in\mathcal{M}_{p,q}$ by
\[
c_{ij}=\frac{1}{\sqrt{\left\vert M_{i}\right\vert \left\vert N_{j}\right\vert
}}\sum_{u\in M_{i}}\sum_{v\in N_{j}}a_{uv}.
\]
For every $i\in\left[  p\right]  ,$ $j\in\left[  q\right]  ,$ set $s_{i}%
=\sqrt{\left\vert N_{i}\right\vert }\alpha_{i},$ $t_{j}=\sqrt{\left\vert
M_{j}\right\vert }\beta_{j}$ and let $\mathbf{t}=\left(  t_{1},...,t_{q}%
\right)  ,$ $\mathbf{s}=\left(  s_{1},...,s_{p}\right)  .$ Clearly $\left\Vert
\mathbf{s}\right\Vert =\left\Vert \mathbf{x}^{\prime}\right\Vert $ and
$\left\Vert \mathbf{t}\right\Vert =\left\Vert \mathbf{y}^{\prime}\right\Vert
.$ Also, we see that
\begin{align*}
\left\vert \left\langle A\mathbf{x}^{\prime}\mathbf{,y}^{\prime}\right\rangle
\right\vert  &  =\left\vert \sum_{i=1}^{m}\sum_{j=1}^{n}a_{ij}x_{j}^{\prime
}\overline{y_{i}^{\prime}}\right\vert =\left\vert \sum_{j=1}^{p}\sum_{i=1}%
^{q}s_{j}\overline{t_{i}}\frac{1}{\sqrt{\left\vert M_{i}\right\vert \left\vert
N_{j}\right\vert }}\sum_{u\in M_{i}}\sum_{v\in Nj}a_{uv}\right\vert \\
&  =\left\vert \sum_{j=1}^{p}\sum_{i=1}^{q}c_{ij}s_{j}\overline{t_{i}%
}\right\vert \leq\sigma_{1}\left(  C\right)  \left\Vert \mathbf{s}\right\Vert
\left\Vert \mathbf{t}\right\Vert =\sigma_{1}\left(  C\right)  \left\Vert
\mathbf{x}^{\prime}\right\Vert \left\Vert \mathbf{y}^{\prime}\right\Vert \\
&  \leq\sigma_{1}\left(  C\right)  \left(  \left\Vert \mathbf{x}\right\Vert
+\frac{1}{3}\right)  \left(  \left\Vert \mathbf{y}\right\Vert +\frac{1}%
{3}\right)  =\frac{16}{9}\sigma_{1}\left(  C\right)
\end{align*}
Hence, in view of (\ref{eq4}), (\ref{pb}) and (\ref{qb}), we see that
\begin{align*}
\sigma_{1}\left(  A\right)   &  \leq8\sigma_{1}\left(  C\right)
\leq\left\Vert C\right\Vert _{F}\leq8\sqrt{pq}\max_{i,j\in\left[  m\right]
}\left\vert c_{ij}\right\vert \\
&  \leq8\cdot912\sqrt{\log12n\log12m}\left\Vert A\right\Vert _{\boxdot}.
\end{align*}
To complete the proof of (\ref{gin2}) assume that $n\geq2$ and $m\geq2$ and
observe that
\begin{align*}
8\cdot912\sqrt{\log12n\log12m}  &  \leq8\cdot912\sqrt{\left(  10\ln
12+1\right)  }\sqrt{\log n\log m}\\
&  <10^{5}\sqrt{\log n\log m}.
\end{align*}

To prove that inequality (\ref{gin2}) is tight up to a constant factor, define
a square symmetric matrix $A=\left[  a_{ij}\right]  $ of size $n$ by letting
$a_{ij}=\left(  ij\right)  ^{-1/2}.$ Set $s_{n}=\sum_{i=1}^{n}1/i.$ It is easy
to see that the vector $\mathbf{x}=\left(  x_{1},\ldots,x_{n}\right)  ,$ where
$x_{i}=\left(  is_{n}\right)  ^{-1/2}$ is of length $1$ and thus satisfies,
\[
\sigma_{1}\left(  A\right)  \geq\left\langle A\mathbf{x},\mathbf{x}%
\right\rangle =\sum_{i=1}^{n}\sum_{j=1}^{n}\frac{1}{ijs_{n}}=s_{n}>\log n.
\]
On the other hand, let $X\subset\left[  n\right]  ,$ $Y\subset\left[
n\right]  $ be such that
\[
\left\Vert A\right\Vert _{\boxdot}=\frac{1}{\sqrt{\left\vert X\right\vert
\left\vert Y\right\vert }}\left\vert \Sigma\left(  A\left[  X,Y\right]
\right)  \right\vert
\]
Clearly $X=\left[  p\right]  ,$ $Y=\left[  q\right]  $ for some $p,q\in\left[
n\right]  $. We thus have
\[
\frac{1}{\sqrt{\left\vert X\right\vert \left\vert Y\right\vert }}\left\vert
\Sigma\left(  A\left[  X,Y\right]  \right)  \right\vert =\frac{1}{\sqrt{pq}%
}\sum_{i=1}^{p}\sum_{j=1}^{q}\frac{1}{\sqrt{ij}}<\frac{4\sqrt{pq}}{\sqrt{pq}%
}=4.
\]
Hence,
\[
\sigma_{1}\left(  A\right)  >\log n>\frac{1}{4}4\log n>\frac{1}{4}\left\Vert
A\right\Vert _{\boxdot}\log n,
\]
and so, inequality (\ref{gin2}) is tight up to a constant factor.
\end{proof}

\subsection{Proofs of Theorems \ref{EML} and \ref{CEML}}

\begin{proof}
[\textbf{Proof of Theorem \ref{EML}}]The proof is essentially a tautology of
the singular value decomposition theorem (see, e.g., \cite{HoJo85}, Ch. 7).
Let
\[
A=\sigma_{1}\left(  A\right)  \overline{\mathbf{y}}\otimes\mathbf{x+}%
\sum_{i=2}^{m}\sigma_{i}\left(  A\right)  \overline{\mathbf{y}_{i}}%
\otimes\mathbf{x}_{i}%
\]
be a singular value decomposition of $A,$ where $\mathbf{y},\mathbf{y}%
_{2},\ldots,\mathbf{y}_{m}\in\mathbb{C}^{m}$ are unit orthogonal left singular
vectors and $\mathbf{x},\mathbf{x}_{2},\ldots,\mathbf{x}_{m}\in\mathbb{C}^{n}$
are unit orthogonal right singular vectors to $\sigma_{1}\left(  A\right)
,\sigma_{2}\left(  A\right)  ,\ldots,\sigma_{m}\left(  A\right)  .$ Hence,
\[
A-\sigma_{1}\left(  A\right)  \overline{\mathbf{y}}\otimes\mathbf{x}%
=\sum_{i=2}^{m}\sigma_{i}\left(  A\right)  \overline{\mathbf{y}_{i}}%
\otimes\mathbf{x}_{i},
\]
and so, $\sigma_{2}\left(  A\right)  =\sigma_{1}\left(  A-\sigma_{1}\left(
A\right)  \overline{\mathbf{y}}\otimes\mathbf{x}\right)  .$ Now (\ref{gin3})
implies that
\[
\sigma_{2}\left(  A\right)  =\sigma_{1}\left(  A-\sigma_{1}\left(  A\right)
\overline{\mathbf{y}}\otimes\mathbf{x}\right)  \geq\left\Vert A-\sigma
_{1}\left(  A\right)  \mathbf{x\otimes y}\right\Vert _{\boxdot}.
\]
If $A$ is nonnegative and its row sums are equal and its column sums are
equal, then we can choose
\[
\mathbf{x}=\left(  \frac{1}{\sqrt{n}},\ldots,\frac{1}{\sqrt{n}}\right)
,\text{ \ \ }\mathbf{y}=\left(  \frac{1}{\sqrt{m}},\ldots,\frac{1}{\sqrt{m}%
}\right)
\]
and so,
\[
\sigma_{1}\left(  A\right)  \overline{\mathbf{y}}\otimes\mathbf{x}=\rho\left(
A\right)  J_{m,n},
\]
completing the proof.
\end{proof}

\begin{proof}
[\textbf{Proof of Theorem \ref{CEML}}]Weyl's inequalities for singular values
(see, e.g., \cite{HoJo94}, Theorem 3.3.16) state that
\[
\sigma_{2}\left(  X+Y\right)  \leq\sigma_{1}\left(  X\right)  +\sigma
_{2}\left(  Y\right)  .
\]
Setting $Y=\rho\left(  A\right)  J_{m,n},$ $X=A-Y,$ and noting that
$\sigma_{2}\left(  Y\right)  =0,$ inequalities (\ref{sin1}) and (\ref{sin2})
follow from (\ref{gin1}) and (\ref{gin2}) respectively.

To see that inequality (\ref{sin2}) is tight, define the square symmetric
matrix $B=\left[  b_{ij}\right]  $ of size $n$ by letting $b_{ij}=\left(
ij\right)  ^{-1/2},$ and set
\[
A=\left(
\begin{array}
[c]{cc}%
J_{n}+B & J_{n}-B\\
J_{n}-B & J_{n}+B
\end{array}
\right)  .
\]
Clearly $\rho\left(  A\right)  =1,$ and so
\[
A-\rho\left(  A\right)  J_{2n}=\left(
\begin{array}
[c]{cc}%
B & -B\\
-B & B
\end{array}
\right)
\]
As shown in the proof of Theorem \ref{th0}, $\left\Vert B\right\Vert
_{\boxdot}<4,$ and so
\[
\left\Vert A-\rho\left(  A\right)  J_{2n}\right\Vert _{\boxdot}\leq4\left\Vert
B\right\Vert _{\boxdot}<16.
\]
On the other hand, $\sigma_{1}\left(  A\right)  =\mu_{1}\left(  A\right)
=2n,$ and the all ones vector is an eigenvector to $\mu_{1}\left(  A\right)
.$ It is easy to check that the $2n$-vector
\[
\left(  1,2^{-1/2},\ldots,n^{-1/2},-1,-2^{-1/2},\ldots,-n^{-1/2}\right)  ,
\]
is an eigenvector of $A$ to the eigenvalue $2\sum_{i}^{n}1/i.$ Hence,
\[
\sigma_{2}\left(  A\right)  \geq2\sum_{i}^{n}1/i>2\log n\geq\frac{1}%
{8}\left\Vert A-\rho\left(  A\right)  J_{2n}\right\Vert _{\boxdot}\sqrt{\log
n\log n}.
\]
and so (\ref{sin2}) is tight up to a constant factor.
\end{proof}

\subsection{Proof of Theorem \ref{th2}}

The following two facts are derived by straightforward methods.

\begin{proposition}
\label{pro1} Let $A\in\mathcal{H}_{n}$ and $k\geq2$. Then the eigenvalues of
$A^{\left(  k\right)  }$ are $k\mu_{1}\left(  A\right)  ,\ldots,k\mu
_{n}\left(  A\right)  $ together with $\left(  k-1\right)  n$ additional $0$'s.
\end{proposition}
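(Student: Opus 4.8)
The plan is to exhibit the eigenvectors of $A^{(k)}$ explicitly in terms of those of $A$ together with vectors supported on single "blocks." Recall that $A^{(k)} = A \otimes J_k$ acts on $\mathbb{C}^n \otimes \mathbb{C}^k \cong \mathbb{C}^{nk}$, where the $n$ coordinate-blocks each have size $k$. First I would use the standard spectral property of Kronecker products: if $A\mathbf{v} = \mu \mathbf{v}$ and $J_k \mathbf{w} = \lambda \mathbf{w}$, then $(A\otimes J_k)(\mathbf{v}\otimes\mathbf{w}) = \mu\lambda\, \mathbf{v}\otimes\mathbf{w}$. Since $A$ is Hermitian it has an orthonormal eigenbasis $\mathbf{v}_1,\ldots,\mathbf{v}_n$ with eigenvalues $\mu_1(A),\ldots,\mu_n(A)$, and $J_k$ has eigenvalue $k$ with the single eigenvector $\mathbf{1}_k = (1,\ldots,1)$ and eigenvalue $0$ with multiplicity $k-1$ on the orthogonal complement $\{\mathbf{w}\in\mathbb{C}^k : \Sigma(\mathbf{w})=0\}$.

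Next I would assemble the eigenbasis of $A^{(k)}$. Taking $\mathbf{v}_i \otimes \mathbf{1}_k$ for $i = 1,\ldots,n$ yields $n$ orthogonal eigenvectors with eigenvalues $k\mu_i(A)$, as claimed. For the zero eigenvalue: fix any orthogonal basis $\mathbf{w}_1,\ldots,\mathbf{w}_{k-1}$ of the sum-zero subspace of $\mathbb{C}^k$; then for each $i=1,\ldots,n$ and each $j=1,\ldots,k-1$, the vector $\mathbf{v}_i\otimes\mathbf{w}_j$ satisfies $A^{(k)}(\mathbf{v}_i\otimes\mathbf{w}_j) = \mu_i(A)\cdot 0 \cdot \mathbf{v}_i\otimes\mathbf{w}_j = 0$. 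This produces $n(k-1)$ further eigenvectors, all for eigenvalue $0$. A quick count gives $n + n(k-1) = nk$ vectors in total; orthogonality of the full family follows because the $\mathbf{v}_i$ are orthogonal in $\mathbb{C}^n$ while $\mathbf{1}_k$ is orthogonal to each $\mathbf{w}_j$ and the $\mathbf{w}_j$ are mutually orthogonal in $\mathbb{C}^k$. Hence we have a complete orthogonal eigenbasis, which pins down the spectrum: it is $k\mu_1(A),\ldots,k\mu_n(A)$ together with $(k-1)n$ zeros.

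There is really no hard step here; the only point requiring a little care is making sure the counted eigenvectors are linearly independent (equivalently, orthogonal), so that one genuinely has the whole spectrum with multiplicities and not merely a sublist — but this falls out immediately from the tensor-product structure as noted above. Alternatively, and perhaps more cleanly for the write-up, one can avoid eigenvectors entirely: compute the characteristic polynomial via $\det(xI_{nk} - A\otimes J_k)$ using that $J_k = k\,\mathbf{e}\mathbf{e}^{*}$ is rank one (with $\mathbf{e} = \mathbf{1}_k/\sqrt{k}$), so $A\otimes J_k$ has rank at most $n$, forcing at least $(k-1)n$ zero eigenvalues, and then restrict to the $n$-dimensional invariant subspace spanned by $\{\mathbf{v}_i\otimes\mathbf{1}_k\}$ to read off the remaining eigenvalues $k\mu_i(A)$. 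Either route is routine; I would present the eigenvector construction as it is the shortest and makes the multiplicities transparent.
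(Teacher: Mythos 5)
Your proof is correct. The paper does not actually supply a proof of this proposition (it is dismissed as "derived by straightforward methods"), and your eigenvector construction via the Kronecker-product identity $(A\otimes J_k)(\mathbf{v}\otimes\mathbf{w})=\mu\lambda\,\mathbf{v}\otimes\mathbf{w}$, together with the count $n+n(k-1)=nk$ of mutually orthogonal eigenvectors, is exactly the standard argument the author is alluding to.
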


\begin{proposition}
\label{pro2}Let $A\in\mathcal{M}_{m,n}$ and $p,q\geq2$. Then the singular
values of $A^{\left(  p,q\right)  }$ are
\[
\sqrt{pq}\sigma_{1}\left(  A\right)  ,\ldots,\sqrt{pq}\sigma_{m}\left(
A\right)
\]
and the rest are zeroes.
\end{proposition}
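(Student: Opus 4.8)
The plan is to construct an explicit singular value decomposition of $A^{(p,q)}=A\otimes J_{p,q}$ out of one for $A$ together with the obvious one for $J_{p,q}$. First note that $J_{p,q}$ is a rank-one matrix: with $\mathbf{a}=p^{-1/2}(1,\dots,1)\in\mathbb{C}^{p}$ and $\mathbf{b}=q^{-1/2}(1,\dots,1)\in\mathbb{C}^{q}$ (unit vectors) we have $J_{p,q}=\sqrt{pq}\,\mathbf{a}\mathbf{b}^{\ast}$, so its only nonzero singular value is $\sqrt{pq}$; equivalently, $J_{p,q}^{\ast}J_{p,q}=p\,J_{q}$, which has eigenvalue $pq$ once and $0$ with multiplicity $q-1$.

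Next I would fix a reduced singular value decomposition $A=\sum_{i=1}^{r}\sigma_{i}(A)\,\mathbf{u}_{i}\mathbf{v}_{i}^{\ast}$, where $r=\min(m,n)$ and $\mathbf{u}_{1},\dots,\mathbf{u}_{r}\in\mathbb{C}^{m}$, $\mathbf{v}_{1},\dots,\mathbf{v}_{r}\in\mathbb{C}^{n}$ are orthonormal systems. By the mixed-product rule for Kronecker products, $(\mathbf{u}\mathbf{v}^{\ast})\otimes(\mathbf{a}\mathbf{b}^{\ast})=(\mathbf{u}\otimes\mathbf{a})(\mathbf{v}\otimes\mathbf{b})^{\ast}$, so bilinearity of $\otimes$ gives
\[
A^{(p,q)}=A\otimes J_{p,q}=\sqrt{pq}\sum_{i=1}^{r}\sigma_{i}(A)\,(\mathbf{u}_{i}\otimes\mathbf{a})(\mathbf{v}_{i}\otimes\mathbf{b})^{\ast}.
\]
Since $\langle\mathbf{u}_{i}\otimes\mathbf{a},\mathbf{u}_{j}\otimes\mathbf{a}\rangle=\langle\mathbf{u}_{i},\mathbf{u}_{j}\rangle\langle\mathbf{a},\mathbf{a}\rangle=\delta_{ij}$, the vectors $\mathbf{u}_{1}\otimes\mathbf{a},\dots,\mathbf{u}_{r}\otimes\mathbf{a}$ are orthonormal in $\mathbb{C}^{mp}$, and likewise $\mathbf{v}_{1}\otimes\mathbf{b},\dots,\mathbf{v}_{r}\otimes\mathbf{b}$ are orthonormal in $\mathbb{C}^{nq}$; hence the displayed identity is a singular value decomposition of $A^{(p,q)}$. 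Reading off the singular values and recalling that $\sigma_{i}(A)=0$ for $r<i\le m$, we conclude that the singular values of $A^{(p,q)}$ are $\sqrt{pq}\,\sigma_{1}(A),\dots,\sqrt{pq}\,\sigma_{m}(A)$ together with zeros, as claimed.

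There is no real difficulty here; the only point to watch is the bookkeeping of the zero singular values — those contributed by $J_{p,q}$, and, when $m>n$, by $A$ itself — so that the total number of singular values of $A^{(p,q)}$ comes out to $mp$, which is transparent from the explicit decomposition above. This parallels the proof of Proposition \ref{pro1} for eigenvalues; alternatively, one can bypass writing out the decomposition and argue directly from $A^{(p,q)}(A^{(p,q)})^{\ast}=(AA^{\ast})\otimes(qJ_{p})$ together with the fact that the spectrum of a Kronecker product consists of the pairwise products of the spectra of its factors.
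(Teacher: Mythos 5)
Your proof is correct. The paper offers no argument for Proposition \ref{pro2} at all---it merely labels it (together with Proposition \ref{pro1}) as ``derived by straightforward methods''---so there is nothing to compare against; your explicit construction, writing $J_{p,q}=\sqrt{pq}\,\mathbf{a}\mathbf{b}^{\ast}$ and tensoring a reduced SVD of $A$ with this rank-one factor via the mixed-product rule, is exactly the standard route and all the steps (orthonormality of the $\mathbf{u}_{i}\otimes\mathbf{a}$ and $\mathbf{v}_{i}\otimes\mathbf{b}$, the bookkeeping of zero singular values, and the alternative argument via $(AA^{\ast})\otimes(qJ_{p})$) check out.
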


For the proof of Theorem \ref{th2} we shall show that the extremal $k$
eigenvalues of $A^{\left(  k\right)  }$ are roughly proportional to the
corresponding eigenvalues of $A.$

\begin{lemma}
\label{prop}Let $k\geq2.$ Then for every $i=1,\ldots,n,$
\begin{align}
0  &  \leq\frac{\mu_{i}\left(  A^{\left(  k\right)  }\right)  }{kn}-\frac
{\mu_{i}\left(  A\right)  }{n}\leq\frac{\left\Vert A\right\Vert _{F}}%
{n\sqrt{n-i+1}},\label{i1}\\
0  &  \geq\frac{\mu_{n-i+1}\left(  A^{\left(  k\right)  }\right)  }{tn}%
-\frac{\mu_{n-i+1}\left(  A\right)  }{n}\geq-\frac{\left\Vert A\right\Vert
_{F}}{n\sqrt{n-i+1}}, \label{i2}%
\end{align}

\end{lemma}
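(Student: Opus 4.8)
The idea is to compare the quadratic form of $A^{(k)}$ with that of $A$ via the Courant–Fischer variational principle, using the block structure of $A^{(k)} = A \otimes J_k$. Write the coordinates of $\mathbb{R}^{kn}$ in $n$ blocks of size $k$, indexed by $(u,a)$ with $u\in[n]$, $a\in[k]$. For a vector $\mathbf{z}\in\mathbb{R}^{kn}$, decompose each block into its average part and its mean-zero part: let $\bar z_u = \tfrac1k\sum_{a} z_{u,a}$ and write $\mathbf{z} = \mathbf{z}^{\parallel} + \mathbf{z}^{\perp}$, where $\mathbf{z}^{\parallel}$ is constant on each block (value $\bar z_u$) and $\mathbf{z}^{\perp}$ has mean zero on each block. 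The key algebraic fact is that $A^{(k)}$ annihilates every $\mathbf{z}^{\perp}$ (since $J_k$ kills mean-zero vectors), so $\langle A^{(k)}\mathbf{z},\mathbf{z}\rangle = \langle A^{(k)}\mathbf{z}^{\parallel},\mathbf{z}^{\parallel}\rangle = k^2\langle A\bar{\mathbf{z}},\bar{\mathbf{z}}\rangle$, where $\bar{\mathbf{z}}=(\bar z_1,\dots,\bar z_n)$; meanwhile $\|\mathbf{z}\|^2 = k\|\bar{\mathbf z}\|^2 + \|\mathbf{z}^{\perp}\|^2$.

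For the upper bound in \eqref{i1}: take the $i$-dimensional subspace $V\subset\mathbb{R}^n$ spanned by the top $i$ eigenvectors of $A$ and lift it to the subspace $\widetilde V\subset\mathbb{R}^{kn}$ of block-constant vectors whose averages lie in $V$; then $\dim\widetilde V = i$ and on $\widetilde V$ the Rayleigh quotient of $A^{(k)}$ equals $k\cdot\frac{\langle A\mathbf{w},\mathbf{w}\rangle}{\|\mathbf{w}\|^2}\ge k\mu_i(A)$, giving $\mu_i(A^{(k)})\ge k\mu_i(A)$, i.e.\ $\mu_i(A^{(k)})/(kn)\ge\mu_i(A)/n$, which is the left inequality (nonnegativity) of \eqref{i1}; the analogous argument with the bottom $i$ eigenvectors gives the left inequality of \eqref{i2}. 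For the right inequality of \eqref{i1} I would instead use the min-max form: for any $\mathbf{z}$ in the span of the top $i$ eigenvectors of $A^{(k)}$, I need $\langle A^{(k)}\mathbf z,\mathbf z\rangle \le (k\mu_i(A) + \text{error})\|\mathbf z\|^2$. Since $\langle A^{(k)}\mathbf z,\mathbf z\rangle = k^2\langle A\bar{\mathbf z},\bar{\mathbf z}\rangle$ and $\|\mathbf z\|^2\ge k\|\bar{\mathbf z}\|^2$, it suffices to bound $k\langle A\bar{\mathbf z},\bar{\mathbf z}\rangle$ from above by $(k\mu_i(A)+\text{error})\|\bar{\mathbf z}\|^2$ — but this is false in general since $\bar{\mathbf z}$ ranges over more than an $i$-dimensional space. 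The correct route is to control the error term using the Frobenius norm: the point is that $A$ has only $n-i$ eigenvalues below $\mu_i(A)$, and for any unit $\mathbf{w}$ one has $\langle A\mathbf w,\mathbf w\rangle \le \mu_i(A) + (\mu_1(A)-\mu_i(A))\cdot(\text{mass of }\mathbf w\text{ on top }i\text{ eigenspace})$; a cleaner bound is $\langle A\mathbf w,\mathbf w\rangle \le \mu_i(A) + \sum_{j<i}(\mu_j(A)-\mu_i(A))\langle \mathbf w,\mathbf v_j\rangle^2$, and then to exploit that for $\mathbf z$ in the top-$i$ eigenspace of $A^{(k)}$ the averaged vectors $\bar{\mathbf z}$ cannot concentrate too much on any single direction — combined with $\sum_{j}\mu_j(A)^2 = \|A\|_F^2$ to bound $\sum_{j<i}\mu_j(A)^2 \le \|A\|_F^2$ and hence, via Cauchy–Schwarz and $\mu_i(A)^2\le\|A\|_F^2/(n-i+1)$ (the latter because $\mu_i(A)$ is below the top $i$ values), produce the stated $\|A\|_F/(n\sqrt{n-i+1})$.

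The cleanest packaging, and the one I would actually write: apply the interlacing/variational bound to the difference. Note $A^{(k)} - $ (the block-average operator scaled appropriately) is a low-rank perturbation, and use Weyl-type inequalities: $\mu_i(A^{(k)}) \le k\mu_i(A) + \sigma$ where $\sigma$ accounts for the discrepancy. Concretely, let $\Pi$ be the orthogonal projection onto block-constant vectors; then $A^{(k)} = k^2 \,\Pi_{\ast}A\Pi_{\ast}$ in suitable coordinates where $\Pi_\ast$ identifies block-constant vectors with $\mathbb{R}^n$ (up to the $\sqrt k$ scaling), so really $\mu_i(A^{(k)}) = k\mu_i(A)$ exactly for $i\le n$ when $\mu_i(A)>0$ — wait, that would make the error zero. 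Here is the subtlety that makes the bound nontrivial: $A^{(k)}$ on $\mathbb{R}^{kn}$ has eigenvalues $k\mu_1(A),\dots,k\mu_n(A)$ together with $(k-1)n$ zeros (Proposition \ref{pro1}), so when we sort all $kn$ of them in decreasing order, the positive values $k\mu_j(A)$ keep their ranks among the positives, but zeros interleave: $\mu_i(A^{(k)})$ (the $i$-th largest of all $kn$) equals $k\mu_i(A)$ if $\mu_i(A)>0$ but equals $0$ (not $k\mu_i(A)$, which would be negative) once we pass the last positive eigenvalue. So the inequality $\mu_i(A^{(k)})/(kn) - \mu_i(A)/n \ge 0$ with the stated upper bound $\|A\|_F/(n\sqrt{n-i+1})$ is really controlling how far $\mu_i(A)$ can be below zero, namely $|\mu_i(A)| \le \|A\|_F/\sqrt{n-i+1}$, which is immediate from $\mu_i(A)^2 \le \tfrac1{n-i+1}\sum_{j=i}^n\mu_j(A)^2 \le \tfrac1{n-i+1}\|A\|_F^2$. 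The main obstacle is therefore organizational: splitting into the cases $\mu_i(A)\ge 0$ (where equality $\mu_i(A^{(k)})=k\mu_i(A)$ holds and both sides of the claimed inequality are zero, trivially satisfied) and $\mu_i(A)<0$ (where $\mu_i(A^{(k)})=0$ and the bound reduces to the elementary Frobenius estimate on $|\mu_i(A)|$), and symmetrically for \eqref{i2}. I would carry out: (1) cite Proposition \ref{pro1} for the eigenvalue list; (2) deduce the exact value of $\mu_i(A^{(k)})$ by case analysis on the sign of $\mu_i(A)$; (3) in each case reduce to $|\mu_i(A)|\le\|A\|_F/\sqrt{n-i+1}$; (4) prove that elementary inequality. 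The only place care is needed is checking, in case $\mu_i(A)\ge 0$, that zeros do not displace $k\mu_i(A)$ from rank $i$ — but any displacing zero would have to exceed $k\mu_i(A)\ge 0$, impossible, so the rank is preserved and equality holds.
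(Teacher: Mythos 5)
Your final packaged argument --- cite Proposition \ref{pro1} for the eigenvalue multiset of $A^{(k)}$, split on the sign of $\mu_i(A)$ so that either $\mu_i(A^{(k)})=k\mu_i(A)$ exactly or $\mu_i(A^{(k)})=0$ with $\mu_i(A)\le\cdots\le 0$ wrongly ordered in magnitude, and then invoke $(n-i+1)\mu_i^2(A)\le\sum_{j\ge i}\mu_j^2(A)\le\|A\|_F^2$ --- is precisely the paper's proof, and it is correct. The variational detour you explore and then discard is not needed; the committed plan matches the paper's approach essentially verbatim.
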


\begin{proof}
We shall prove only (\ref{i1}); inequality (\ref{i2}) follows likewise,
applying (\ref{i1}) to $-A.$ Note that Proposition \ref{pro2} implies that
$A^{\left(  k\right)  }$ and $A$ have the same number of positive eigenvalues.
If $\mu_{i}\left(  A^{\left(  k\right)  }\right)  >0,$ then $\mu_{i}\left(
A\right)  >0$ and $\mu_{i}\left(  A^{\left(  k\right)  }\right)  =k\mu
_{i}\left(  A\right)  ,$ so (\ref{i1}) holds. If $\mu_{i}\left(  A^{\left(
k\right)  }\right)  \leq0,$ then $\mu_{i}\left(  A\right)  \leq0$ and so
\[
0\geq\mu_{i}\left(  A\right)  \geq\cdots\geq\mu_{n}\left(  A\right)  .
\]
Hence, inequality (\ref{i1}) follows from
\[
\left(  n-i+1\right)  \mu_{i}^{2}\left(  A\right)  \leq\sum_{j=i}^{n}\mu
_{j}^{2}\left(  A\right)  \leq\left\Vert A\right\Vert _{F}^{2}.
\]

\end{proof}

\begin{proof}
[\textbf{Proof of Theorem \ref{th2}}]Let $k$ be a positive integer. By the
definition of $\widehat{\delta}_{\square}\left(  \cdot,\cdot\right)  ,$ there
is a permutation matrix $P\in\mathcal{P}_{mnk}$ such that
\[
\widehat{\delta}_{\square}\left(  A^{\left(  mk\right)  },B^{\left(
nk\right)  }\right)  =\left\Vert A^{\left(  mk\right)  }-PB^{\left(
nk\right)  }P^{-1}\right\Vert _{\square}.
\]
Referring to \cite{HoJo94}, Theorem 3.3.16, we have
\[
\left\vert \mu_{i}\left(  A^{\left(  mk\right)  }\right)  -\mu_{i}\left(
B^{\left(  nk\right)  }\right)  \right\vert =\left\vert \mu_{i}\left(
A^{\left(  mk\right)  }\right)  -\mu_{i}\left(  PB^{\left(  nk\right)  }%
P^{-1}\right)  \right\vert \leq\sigma_{1}\left(  A^{\left(  mk\right)
}-PB^{\left(  nk\right)  }P^{-1}\right)  .
\]
Now, inequality (\ref{gin1.1}) implies that
\begin{align*}
\left\vert \mu_{i}\left(  A^{\left(  mk\right)  }\right)  -\mu_{i}\left(
B^{\left(  nk\right)  }\right)  \right\vert  &  \leq\sigma_{1}\left(
A^{\left(  mk\right)  }-PB^{\left(  nk\right)  }P^{-1}\right) \\
&  \leq4\sqrt{2\left\Vert A^{\left(  mk\right)  }-PB^{\left(  nk\right)
}P^{-1}\right\Vert _{\square}}mnk\\
&  \leq6\sqrt{\widehat{\delta}_{\square}\left(  A^{\left(  mk\right)
},B^{\left(  nk\right)  }\right)  }mnk.
\end{align*}
and so%
\begin{equation}
\frac{1}{mnk}\left\vert \mu_{i}\left(  A^{\left(  mk\right)  }\right)
-\mu_{i}\left(  B^{\left(  nk\right)  }\right)  \right\vert \leq
6\sqrt{\widehat{\delta}_{\square}\left(  A^{\left(  mk\right)  },B^{\left(
nk\right)  }\right)  } \label{i3}%
\end{equation}

To prove (i), note that the triangle inequality and Lemma \ref{prop} imply
that
\begin{align*}
\left\vert \frac{\mu_{i}\left(  A\right)  }{n}-\frac{\mu_{i}\left(  B\right)
}{m}\right\vert  &  \leq\left\vert \frac{\mu_{i}\left(  A\right)  }{n}%
-\frac{\mu_{i}\left(  A^{\left(  mk\right)  }\right)  }{mnk}\right\vert
+\left\vert \frac{\mu_{i}\left(  B\right)  }{m}-\frac{\mu_{i}\left(
B^{\left(  nk\right)  }\right)  }{mnk}\right\vert +\left\vert \frac{\mu
_{i}\left(  A^{\left(  mk\right)  }\right)  }{mnk}-\frac{\mu_{i}\left(
B^{\left(  nk\right)  }\right)  }{mnk}\right\vert \\
&  \leq\frac{\left\Vert A\right\Vert _{F}}{n\sqrt{n-i+1}}+\frac{\left\Vert
B\right\Vert _{F}}{m\sqrt{m-i+1}}+6\sqrt{\widehat{\delta}_{\square}\left(
A^{\left(  mk\right)  },B^{\left(  nk\right)  }\right)  }\\
&  \leq\frac{1}{\sqrt{n-i+1}}+\frac{1}{\sqrt{m-i+1}}+6\sqrt{\widehat{\delta
}_{\square}\left(  A^{\left(  mk\right)  },B^{\left(  nk\right)  }\right)  }.
\end{align*}
Letting $k$ tend to infinity and passing to limits in the above inequality, we
obtain%
\[
\left\vert \frac{\mu_{i}\left(  A\right)  }{n}-\frac{\mu_{i}\left(  B\right)
}{m}\right\vert \leq\frac{1}{\sqrt{n-i+1}}+\frac{1}{\sqrt{m-i+1}}%
+6\sqrt{\delta_{\square}\left(  A,B\right)  }.
\]
Hence, for every $i=1,\ldots,\left\lceil m/2\right\rceil ,$
\begin{align*}
\left\vert \frac{\mu_{i}\left(  A\right)  }{n}-\frac{\mu_{i}\left(  B\right)
}{m}\right\vert  &  \leq\frac{1}{\sqrt{n/2}}+\frac{1}{\sqrt{m/2}}%
+6\sqrt{\delta_{\square}\left(  A,B\right)  },\\
\left\vert \frac{\mu_{n-i+1}\left(  A\right)  }{n}-\frac{\mu_{m-i+1}\left(
B\right)  }{m}\right\vert  &  \leq\frac{1}{\sqrt{n/2}}+\frac{1}{\sqrt{m/2}%
}+6\sqrt{\delta_{\square}\left(  A,B\right)  }.
\end{align*}

Now let us prove (ii.a). Suppose that $\mu_{i}\left(  A\right)  \geq0$ and
$\mu_{i}\left(  B\right)  \geq0.$ Then using Proposition \ref{pro1} and
(\ref{i3}), we find that
\[
\left\vert \frac{\mu_{i}\left(  A\right)  }{n}-\frac{\mu_{i}\left(  B\right)
}{m}\right\vert =\frac{1}{mnk}\left\vert \mu_{i}\left(  A^{\left(  mk\right)
}\right)  -\mu_{i}\left(  B^{\left(  nk\right)  }\right)  \right\vert
\leq6\sqrt{\widehat{\delta}_{\square}\left(  A^{\left(  mk\right)
},B^{\left(  nk\right)  }\right)  }.
\]
Letting $k$ tend to infinity and passing to limits (ii.a) follows. The clause
(ii.b) follows by a similar argument.
\end{proof}

\begin{proof}
[\textbf{Proof of Theorem \ref{th3}}]The proof is a straightforward
modification of the proof of Theorem \ref{th2}. Let $k$ be a positive integer.
By the definition of $\widehat{\delta}_{\boxminus}\left(  \cdot,\cdot\right)
,$ there exist permutation matrices $P\in\mathcal{P}_{mnk}$ and $Q\in
\mathcal{P}_{rsk}$ such that
\[
\widehat{\delta}_{\boxminus}\left(  A^{\left(  rk,sk\right)  },B^{\left(
mk,nk\right)  }\right)  =\left\Vert A^{\left(  rk,sk\right)  }-PB^{\left(
mk,nk\right)  }Q\right\Vert _{\square}.
\]
Since $\sigma_{i}\left(  B^{\left(  mk,nk\right)  }\right)  =\sigma_{i}\left(
PB^{\left(  mk,nk\right)  }Q\right)  ,$ referring to the inequality%
\[
\left\vert \sigma_{i}\left(  X\right)  -\sigma_{i}\left(  Y\right)
\right\vert \leq\sigma_{1}\left(  X-Y\right)  ,
\]
(see, e.g., \cite{HoJo94}, Theorem 3.3.16,), we obtain
\[
\left\vert \sigma_{i}\left(  A^{\left(  rk,sk\right)  }\right)  -\sigma
_{i}\left(  B^{\left(  mk,nk\right)  }\right)  \right\vert \leq\left\vert
\sigma_{i}\left(  A^{\left(  rk,sk\right)  }\right)  -\sigma_{i}\left(
PB^{\left(  mk,nk\right)  }Q\right)  \right\vert \leq\sigma_{1}\left(
A^{\left(  rk,sk\right)  }-PB^{\left(  mk,nk\right)  }Q\right)  .
\]
Now, inequality (\ref{gin1.1}) implies that
\begin{align*}
\left\vert \sigma_{i}\left(  A^{\left(  rk,sk\right)  }\right)  -\sigma
_{i}\left(  B^{\left(  mk,nk\right)  }\right)  \right\vert  &  \leq\sigma
_{1}\left(  A^{\left(  rk,sk\right)  }-PB^{\left(  mk,nk\right)  }Q\right) \\
&  \leq4k\sqrt{2\left\Vert A^{\left(  rk,sk\right)  }-PB^{\left(
mk,nk\right)  }Q\right\Vert _{\square}mnrs}\\
&  \leq6k\sqrt{\widehat{\delta}_{\boxminus}\left(  A^{\left(  rk,sk\right)
},B^{\left(  mk,nk\right)  }\right)  mnrs},
\end{align*}
and so,%
\[
\frac{1}{k\sqrt{mnrs}}\left\vert \sigma_{i}\left(  A^{\left(  rk,sk\right)
}\right)  -\sigma_{i}\left(  B^{\left(  mk,nk\right)  }\right)  \right\vert
\leq6\sqrt{\widehat{\delta}_{\boxminus}\left(  A^{\left(  rk,sk\right)
},B^{\left(  mk,nk\right)  }\right)  }.
\]
Finally, using Proposition \ref{pro2}, we find that
\[
\left\vert \frac{\sigma_{i}\left(  A\right)  }{\sqrt{mn}}-\frac{\sigma
_{i}\left(  B\right)  }{\sqrt{rs}}\right\vert =\frac{1}{k\sqrt{mnrs}%
}\left\vert \sigma_{i}\left(  A^{\left(  rk,sk\right)  }\right)  -\sigma
_{i}\left(  B^{\left(  mk,nk\right)  }\right)  \right\vert \leq6\sqrt
{\widehat{\delta}_{\boxminus}\left(  A^{\left(  rk,sk\right)  },B^{\left(
mk,nk\right)  }\right)  }.
\]
Letting $k$ tend to infinity and passing to limits, the proof is completed.
\end{proof}

\subsection{Proof of Theorem \ref{ssamp}}

\begin{proof}
[\textbf{Proof of Theorem \ref{ssamp}}]Let $X$ be a uniformly random subset of
$\left[  n\right]  $ of size $k.$ Let $B=A\left[  X,X\right]  .$ The result
Borgs et al. implies that
\[
\delta_{\square}\left(  A,B\right)  \leq10\left\vert A\right\vert _{\infty
}\left(  \log_{2}k\right)  ^{-1/2}%
\]
with probability at least $1-\exp\left(  -k^{2}/\left(  2\log_{2}k\right)
\right)  $.

By Cauchy's Interlacing theorem, for every $i=1,\ldots,k,$%
\[
\mu_{i}\left(  A\right)  \geq\mu_{i}\left(  B\right)  \geq\mu_{n-i+1}\left(
A\right)  .
\]
Hence, if $\mu_{i}\left(  B\right)  \geq0,$ then $\mu_{i}\left(  A\right)
\geq0,$ and clause (ii.a) of Theorem \ref{th2} implies that
\[
\left\vert \frac{\mu_{i}\left(  A\right)  }{n}-\frac{\mu_{i}\left(  B\right)
}{k}\right\vert <3\delta_{\square}\left(  A,B\right)  ^{1/2}\leq30\left\vert
A\right\vert _{\infty}\left(  \log_{2}k\right)  ^{-1/4}.
\]
If $\mu_{i}\left(  B\right)  <0,$ then $\mu_{n-i+1}\left(  A\right)  <0,$ and
clause (ii.b) of Theorem \ref{th2} implies that%
\[
\left\vert \frac{\mu_{n-k+i}\left(  G\right)  }{n}-\frac{\mu_{i}\left(
H\right)  }{k}\right\vert \leq3\delta_{\square}\left(  A,B\right)  ^{1/2}%
\leq30\left\vert A\right\vert _{\infty}\left(  \log_{2}k\right)  ^{-1/4},
\]
completing the proof of Theorem \ref{ssamp}.
\end{proof}

\subsection*{Concluding remarks}

\begin{enumerate}
\item Note that the norm $\left\Vert A\right\Vert _{\boxdot}$ seems subtler
than $\left\Vert A\right\Vert _{\square}$. Yet while $\left\Vert A\right\Vert
_{\square}$ was used in a successful version of Szemer\'{e}di's Regularity
Lemma (\cite{FrKa99}), $\left\Vert A\right\Vert _{\boxdot}$ has never been
studied explicitly in this respect. A natural question arises: what type of
Regularity Lemma one can prove using $\left\Vert A\right\Vert _{\boxdot}$.

\item Let $A\in\mathcal{M}_{m,n}$ and $B\in\mathcal{M}_{p,q}.$ We studied the
spectral difference of $A$ and $B$ in the form
\[
\max_{1\leq i\leq\min\left(  p,q,m,n\right)  }\left\vert \frac{\sigma
_{i}\left(  A\right)  }{\sqrt{mn}}-\frac{\sigma_{i}\left(  B\right)  }%
{\sqrt{pq}}\right\vert
\]
On the other hand, the vector of the singular values of a matrix $A$ becomes a
unit vector when divided by $\left\Vert A\right\Vert _{F}.$ Thus, it seems
more appropriate to study
\[
\max_{1\leq i\leq\min\left(  p,q,m,n\right)  }\left\vert \frac{\sigma
_{i}\left(  A\right)  }{\left\Vert A\right\Vert _{F}}-\frac{\sigma_{i}\left(
B\right)  }{\left\Vert B\right\Vert _{F}}\right\vert .
\]
Can Theorem \ref{th3} be modified accordingly? Similar modifications seem
possible for the eigenvalues of Hermitian matrices.

\item Some, but not all, of our results can be extended for complex graphons,
i.e., measurable functions $f:\left[  0,1\right]  ^{2}\rightarrow\mathbb{C.}$
One way of doing this is approximation by step functions with finitely many
steps. We leave these extensions to interested readers.\bigskip
\end{enumerate}

\end{document}